\documentclass{amsart}

\usepackage[OT2,T1]{fontenc}
\usepackage{tilmansdef}
\usepackage{tikz}
\usepackage[colorlinks=true,linkcolor=blue,citecolor=blue]{hyperref} 
\usepackage{tensor}
\usetikzlibrary{matrix,arrows,decorations,cd}

\usepackage{mathtools,stmaryrd}

\DeclareFontFamily{OT1}{pzc}{}
\DeclareFontShape{OT1}{pzc}{m}{it}{<-> s * [1.10] pzcmi7t}{}
\DeclareMathAlphabet{\mathpzc}{OT1}{pzc}{m}{it}

\DeclareMathOperator{\Pol}{Pol}
\DeclareMathOperator{\pol}{pol}


\newcommand{\FSch}[1]{\operatorname{FSch}_{#1}}

\newcommand{\DMod}[1]{\operatorname{Dmod}_{#1}}

\newcommand{\DDModF}[1]{\mathbb{D}\operatorname{mod}_{#1}^{F}}

\newcommand{\DieuFor}{\mathbb D^{\operatorname{f}}}

\newcommand{\FGps}[1]{\operatorname{Fgps}_{#1}} 
\newcommand{\FGpsc}[1]{\operatorname{Fgps}^c_{#1}} 
\newcommand{\FGpse}[1]{\operatorname{Fgps}^{e}_{#1}} 
\newcommand{\AbSch}[1]{\operatorname{AbSch}_{#1}} 
\newcommand{\AbSchu}[1]{\operatorname{AbSch}^u_{#1}} 
\newcommand{\AbSchm}[1]{\operatorname{AbSch}^m_{#1}} 
\newcommand{\Hopf}[1]{\operatorname{Hopf}_{#1}}

\renewcommand{\ell}{\mathpzc{l}}

\DeclareMathOperator{\frob}{frob}

\DeclareMathOperator{\Gal}{Gal}

\DeclareMathOperator{\nil}{nil}


\newcommand{\modtensor}[2]{\rtimes}
\newcommand{\modcotensor}[2]{\hom}
\newcommand{\Spf}[1]{\operatorname{Spf}\left({#1}\right)}
\newcommand{\Reg}[1]{\mathcal{O}_{#1}}


\DeclareMathOperator{\alg}{alg}

\newtheorem{notation*}[equation]{Notation}

\newenvironment{customthm}[1]%
  {\innercustomthm}%
  {\endinnercustomthm}

\newenvironment{customlemma}[1]%
  {\innercustomlemma}%
  {\endinnercustomlemma}
\title{Affine and formal abelian group schemes on $p$-polar rings}
\author{Tilman Bauer}
\address{KTH Royal Institute of Technology\\Institutionen för matematik\\Lindstedtsvägen 25\\10044 Stockholm\\Sweden}
\email{tilmanb@kth.se}
\date\today
\dateposted\today
\keywords{$p$-polar ring, formal group, affine group scheme, Witt vectors, Dieudonné theory}
\subjclass[2010]{14L05,14L15,14L17,13A99,13A35,16T05}

\begin{document}

\begin{abstract}
We show that the functor of $p$-typical co-Witt vectors on commutative algebras over a perfect field $k$ of characteristic $p$ is defined on, and in fact only depends on, a weaker structure than that of a $k$-algebra. We call this structure a $p$-polar $k$-algebra. By extension, the functors of points for any $p$-adic affine commutative group scheme and for any formal group are defined on, and only depend on, $p$-polar structures. In terms of abelian Hopf algebras, we show that a cofree cocommutative Hopf algebra can be defined on any $p$-polar $k$-algebra $P$, and it agrees with the cofree commutative Hopf algebra on a commutative $k$-algebra $A$ if $P$ is the $p$-polar algebra underlying $A$; a dual result holds for free commutative Hopf algebras on finite $k$-coalgebras.
\end{abstract}

\maketitle

\section{Introduction} \label{sec:intro}

Let $p$ be a prime. We consider the following generalizations of the notion of a $k$-algebra $A$:

\begin{defn}
Let $k$ be a (commutative) ring and $A$ a $k$-module. A \emph{$p$-polar $k$-algebra structure} on $A$ is a symmetric $k$-multilinear map $\mu\colon A^{\otimes_k p} \to A$ such that
\[
\mu(\mu(x_1,\dots,x_p),x_{p+1},\dots,x_{2p-1})\tag{ASSOC} \text{ is $\Sigma_{2p-1}$-invariant}
\]
for the permutation action of the symmetric group $\Sigma_{2p-1}$ on $x_1,\dots,x_{2p-1} \in A$. We will call a $p$-polar $\Z$-algebra a $p$-polar ring.
\end{defn}

A morphism of $p$-polar algebras is the evident structure-preserving map, making $p$-polar algebras into a category $\Pol_p(k)$. We denote by $\pol_p(k)$ the full subcategory of $p$-polar algebras which are of finite length as $k$-modules.

Clearly, any $k$-algebra $R$ gives rise to a $p$-polar structure for each $p$ by restriction, called its \emph{polarization} $\pol(R)$. If $R$ is of finite length then so is $\pol(R)$, so polarization gives functors
\[
\pol\colon \Alg_k \to \Pol_p(k) \quad \text{and} \quad \pol\colon \alg_k \to \pol_p(k) 
\]
from the categories of $k$-algebras and finite-length $k$-algebras, respectively.

Our main results concern the following categories over a perfect field $k$ of characteristic $p$:
\begin{itemize} 
	\item the category $\AbSch{k}$ of affine, commutative group schemes (``affine groups''), anti-equivalent to the category of bicommutative Hopf algebras over $k$;
	\item its full subcategory $\AbSch{k}^p$ of $p$-adic groups, i.~e. group schemes with values in abelian pro-$p$-groups). These correspond do bicommutative Hopf algebras $H$ that are \emph{$p$-adic}: $H \cong \colim_n H[p^n]$, where $H[p^n]$ denotes the kernel of the endomorphism $[p^n]$ of $H$ representing multiplication by $p^n$.
	\item the category $\FGps{k}$ of affine, commutative, formal group schemes (``formal groups''). These are ind-representable functors from the category $\alg_k$ of finite-dimensional $k$-algebras to abelian groups, as in \cite{fontaine:groupes-divisibles}. The category $\FGps{k}$ is anti-equivalent to the category of \emph{complete Hopf algebras}, which is the category of cogroup objects in the category of pro-finite dimensional $k$-algebras (with monoidal structure given by the profinitely completed tensor product);
	\item its full subcategory $\FGps{k}^p$ of formal $p$-group schemes, i.~e. formal groups taking values in $p$-groups.
\end{itemize}

The categories $\FGps{k}$ and $\AbSch{k}$ are anti-equivalent by Cartier duality, represented by taking (continuous) $k$-linear duals at the level of (complete) Hopf algebras, and this anti-equivalence restricts to an anti-equivalence between $\FGps{k}^p$ and $\AbSch{k}^p$.

\stepcounter{equation} 
\begin{customthm}{A}\label{thm:affineschemefactorization}
Let $k$ be a perfect field of characteristic $p$ and $G \in \AbSch{k}^p$. Then the functor of points of $G$ factors through $\pol$, naturally in $G$:
\[
\begin{tikzcd}
\Alg_k \arrow[r,"G"] \ar[dr,swap,"\pol"] & \Ab\\
& \Pol_p(k) \ar[u,"\tilde G"]\\
\end{tikzcd}
\]
\end{customthm}
Note the restriction to $p$-adic group schemes. There is also a companion result for formal groups where, interestingly, a similar restriction is not necessary:
\begin{customthm}{F}\label{thm:formalschemefactorization}
Let $k$ be a perfect field of characteristic $p$ and $G \in \FGps{k}$. Then the functor of points of $G$ factors through $\pol$, naturally in $G$:
\[
\begin{tikzcd}
\alg_k \arrow[r,"G"] \ar[dr,swap,"\pol"] & \Ab\\
& \pol_p(k) \ar[u,"\tilde G"]\\
\end{tikzcd}
\]
\end{customthm}

To prove Theorems~\ref{thm:affineschemefactorization} and \ref{thm:formalschemefactorization}, we make use of free (formal) group scheme functors:

\stepcounter{equation}
\begin{customlemma}{A}\label{lemma:freepadicgroup}
The forgetful functor $U\colon \AbSch{k}^p \to \Alg_k^{\op}$ from $p$-adic affine groups to $k$-algebras has a left adjoint $\Fr$.
\end{customlemma}
\begin{customlemma}{F}\label{lemma:freeformalgroup}
The forgetful functor $U\colon \FGps{k} \to (\Pro-\alg_k)^{\op}$ from formal groups to pro-finite dimensional $k$-algebras has a left adjoint $\Fr$.
\end{customlemma}


Using these free functors we show:

\stepcounter{equation}
\begin{customthm}{A}\label{thm:freeaffinegroupfactorization}
Let $k$ be a perfect field of characteristic $p$. Then $\Fr$ factors through $\pol$:
\[
\begin{tikzcd}
\Alg_k^\op \arrow[r,"\Fr"] \ar[dr,swap,"\pol"] & \AbSch{k}^p\\
& \Pol_p(k)^\op \ar[u,"\tilde \Fr"]\\
\end{tikzcd}
\]
\end{customthm}

\begin{customthm}{F}\label{thm:freeformalgroupfactorization}
Let $k$ be a perfect field of characteristic $p$. Then $\Fr$ factors through $\pol$:
\[
\begin{tikzcd}
\alg_k^\op \arrow[r,"\Fr"] \ar[dr,swap,"\pol"] & \FGps{k}\\
& \pol_p(k)^\op \ar[u,"\tilde \Fr"]\\
\end{tikzcd}
\]
\end{customthm}
In the latter theorem, the functor $\Fr$ is restricted to the subcategory $\alg_k$ of $\Pro-\alg_k$.

Assuming these theorems, we immediately obtain:

\begin{proof}[Proofs of Theorems~\ref{thm:affineschemefactorization} and \ref{thm:formalschemefactorization}]
Given any $M \in \AbSch{k}^p$ and $R \in \Alg_k$ (resp.  $M \in \FGps{k}$ and $R \in \alg_k$), we have that
\[
M(R) = \Hom(\Spec R, M) = \Hom(\Fr(R),M),
\]
where the last Hom group is of objects of $\AbSch{k}^p$ or $\FGps{k}$, respectively. Since $\Fr$ factors through $\Pol_p(k)$ (resp. $\pol_p(k)$), so does $M$.
\end{proof}

In terms of Hopf algebras, this can be reformulated in the following way. Theorem~\ref{thm:freeaffinegroupfactorization} says that the cofree cocommutative $p$-adic Hopf algebra functor on $k$-algebras factors through $p$-polar algebras. Using the Cartier equivalence between formal groups and bicommutative Hopf algebras, Theorem~\ref{thm:freeformalgroupfactorization} says that the \emph{free commutative} Hopf algebra functor on finite-dimensional $k$-coalgebras factors through the opposite category of finite-dimensional $p$-polar $k$-algebras.

\medskip
Instead of trying to prove Thms.~\ref{thm:freeaffinegroupfactorization} and \ref{thm:freeformalgroupfactorization} directly, we take a detour along Dieudonné functors to the land of Witt vectors. The version of Dieudonné functors we are using (cf. Thm.~\ref{thm:dieudonne}) are of the form 
\[
D\colon (\AbSch{k})^\op \to \DMod{k}^p
\]
and
\[
\DieuFor\colon (\FGps{k})^\op \to \DDModF{k},
\]
and define contravariant equivalences between $\AbSch{k}^p$ and $\FGps{k}^p$ and certain categories of $W(k)$-modules with two operations $F$ and $V$, called Frobenius and Verschiebung. Here $W(k)$ denotes the ring of $p$-typical Witt vectors of the field $k$. More generally, let $W_n(R)$ be the group of $p$-typical Witt vectors of length $n$ of a ring $R$. 

The Verschiebung  $V\colon W_n(R) \to W_{n+1}(R)$ gives rise to the group of unipotent co-Witt vectors
\[
CW^u(R) = \colim (W_1(R) \xrightarrow{V} W_2(R) \xrightarrow{V} \cdots),
\]
an object of $\DMod{k}^p$. It has a completion, $CW(R)$, the group of co-Witt vectors, consisting of possibly infinite negatively graded sequences $(\dots,a_{-1},a_0)$ of elements of $R$ almost all of which are nilpotent.

We prove:
\begin{thm}\label{thm:Wittofpolar}
Let $k$ be a perfect field. Then the functors $W_n$, $CW$, and $CW^u$ from $\Alg_k$ to $\DMod{k}^{p}$ factor naturally through $\Pol_p(k)$.
\end{thm}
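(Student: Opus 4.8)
The plan is to use the explicit description of $W_n$, $CW^u$, and $CW$ by universal polynomials and to isolate precisely which operations are visible to the $p$-polar structure. Call a natural $k$-algebra law $R^m\to R$ a \emph{$p$-polar operation} if it lies in the $k$-span of the iterated multiplications built from $\mu$. The $(\text{ASSOC})$ axiom is exactly what guarantees that nesting $j$ copies of $\mu$ produces a single well-defined symmetric operation $\mu^{(j)}$ of arity $1+j(p-1)$; consequently the $p$-polar operations are the $k$-linear combinations of monomials whose total degree is $\equiv 1\pmod{p-1}$, and on $\pol(R)$ they recover the corresponding polynomial maps on $R$. Since the free $p$-polar algebra on generators $x_1,\dots,x_m$ is the degree-$(\equiv 1)$ summand of $\pol(k[x_1,\dots,x_m])$, any identity between $p$-polar operations that holds on every $\pol(R)$ already holds on every $P\in\Pol_p(k)$. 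This \emph{transport principle} will let me carry all the relevant polynomial identities (associativity of addition, the relations $FV=VF=p$, the module axioms) from genuine $k$-algebras to arbitrary $p$-polar algebras for free; note that the underlying set of each functor is just a set of sequences in $R$, so the only task is to re-express its operations through $\mu$.

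First I would record the degree bookkeeping. Assigning weight $p^j$ to the Witt components $x_j,y_j$, the addition polynomials $S_i$ are isobaric of weight $p^i$; over a $k$-algebra the Frobenius is $F(a_0,a_1,\dots)=(a_0^p,a_1^p,\dots)$, the Verschiebung $V$ is the shift, and the Teichm\"uller action is $[\lambda]\cdot x=(\lambda^{p^i}x_i)_i$. Because $p^j\equiv 1\pmod{p-1}$, the weight of a monomial is congruent to its ordinary degree modulo $p-1$; hence every monomial occurring in $S_i$ has degree $\equiv p^i\equiv 1$, and $F$, $V$, and Teichm\"uller multiplication are manifestly $p$-polar (of degrees $p$ and $1$). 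So addition, $F$, $V$, and scalar multiplication by Teichm\"uller representatives each factor through $\pol$.

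The key structural point is that one must \emph{not} try to realize the ring structure of $W_n(R)$: Witt multiplication is isobaric of weight $2p^i$, i.e.\ of degree $\equiv 2\pmod{p-1}$, hence is not $p$-polar for $p>2$. What rescues the argument is that the target $\DMod{k}^p$ only remembers the $W(k)$-\emph{module} structure together with $F$ and $V$, and this is generated by the operations already shown to be $p$-polar: writing $\lambda=\sum_i V^i([\lambda_i])$ in $W(k)$ and using the projection formula $V^i(a)\cdot x=V^i(a\cdot F^i x)$ gives $\lambda\cdot x=\sum_i V^i([\lambda_i]\cdot F^i x)$, an expression assembled solely from $F$, $V$, Teichm\"uller multiplication, and addition. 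Thus the entire Dieudonné-module structure of $W_n$ is $p$-polar, the defining relations hold on all of $\Pol_p(k)$ by the transport principle, and membership in $\DMod{k}^p$ is inherited since it is recovered on each $\pol(R)$; this produces $\tilde W_n$.

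Finally I would pass to the colimit and its completion. As $V$ is a $p$-polar natural transformation, $CW^u=\colim(W_1\xrightarrow{V}W_2\xrightarrow{V}\cdots)$ is a filtered colimit of objects and maps already factored through $\pol$, so $\widetilde{CW^u}$ is obtained by forming the same colimit in $\DMod{k}^p$. For $CW$ one uses the same universal addition formulas on negatively indexed sequences; the ``almost all nilpotent'' condition is itself $p$-polar, since $x$ is nilpotent iff $\mu^{(j)}(x,\dots,x)=0$ for some $j$, so the completion can be formed $p$-polarly and $\widetilde{CW}$ restricts to $CW$ along $\pol$. I expect the genuine obstacle to be the two coherence issues — checking with the full force of $(\text{ASSOC})$ that degree-$(\equiv 1)$ monomials give honestly well-defined operations on an arbitrary $p$-polar algebra, and that identities descend from $\pol(R)$ to all of $\Pol_p(k)$ — rather than the degree counting, which is routine once the framework is in place.
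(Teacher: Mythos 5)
Your proposal is correct in substance and pivots on the same structural fact as the paper: the universal Witt polynomials lie in the free $p$-polar ring, i.e.\ the span of monomials of total degree $\equiv 1 \pmod{p-1}$ inside a polynomial ring, and every identity among such operations descends from genuine algebras to arbitrary $p$-polar algebras by naturality --- your ``transport principle'' is exactly the paper's ``the case for arbitrary $A$ follows from naturality.'' Where you genuinely differ is in \emph{how} membership in the free $p$-polar ring is established. The paper re-runs the classical construction internally to $p$-polar rings: it proves a $p$-polar version of Dwork's lemma and uses injectivity of the ghost map on the torsion-free free $p$-polar ring to define addition, $F$, $V$ and the $W(k)$-action there. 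You instead import the classical integral polynomials $S_i$ and check by the isobaric-weight count (weight $p^i$, each variable of weight $p^j\equiv 1$, hence degree $\equiv 1$) that they already live in the $p$-polar subring. Your route outsources integrality to the classical theory and is more economical; your observation that binary Witt multiplication has degree $\equiv 2$ and is therefore invisible to the polar structure is a nice explanation of why only the $\RR$-module structure survives (the paper does retain a $p$-\emph{ary} product, of weight $p\cdot p^i\equiv 1$, but does not need it for this theorem). Your assembly of the full $W(k)$-action from $V^i$, Teichm\"uller scalars and $F$ via the projection formula is a mild variant of the paper's Lemma on the $p$-polar $W(k)$-algebra structure and is fine.

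The one place where degree-counting alone does not suffice, and which your sketch elides, is the addition on $CW$ (as opposed to $CW^u$): one must prove that $S_m(a_{-n-m},\dots,a_{-n},b_{-n-m},\dots,b_{-n})$ is \emph{eventually constant} in $m$, so that the componentwise limits defining $a+b$ exist. This stabilization is not a formal consequence of the $S_m$ being $p$-polar operations; it uses the hypothesis that the ideal $(a_{-r},a_{-r-1},\dots,b_{-r},\dots)$ satisfies $I^{p^s}=0$, and the paper devotes a separate proposition to it, reducing to the universal nilpotent $p$-polar example sitting inside the corresponding commutative one and invoking Fontaine's Prop.~II.1.1. Your framework accommodates this (the universal example is again a quotient of a free $p$-polar ring), but ``the completion can be formed $p$-polarly'' is not yet an argument. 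Relatedly, the almost-all-nilpotent condition in the definition of $CW$ concerns nilpotence of the generated \emph{ideal}, not merely elementwise nilpotence $x^{1+j(p-1)}=0$; both are expressible in a $p$-polar ring, but you should state and use the former.
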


The following couple of theorems provide the link betweeen co-Witt vectors and free (formal) groups:
\stepcounter{equation}
\begin{customthm}{F} \label{thm:dieudonneoffreeformal}
Let $k$ be a perfect field of characteristic $p$ and $R$ a finite-dimensional $k$-algebra. Then there is a natural isomorphisms
\[
\DieuFor(\Fr(R)) \cong CW(R).
\]
\end{customthm}
Since the right hand side factors through $\pol_p(k)$ by Thm.~\ref{thm:Wittofpolar}, so does the left hand side. This almost proves Thm.~\ref{thm:freeformalgroupfactorization} -- but not quite, since $\DieuFor$ only provides an equivalence of $\DDModF{k}$ with the full subcategory $\FGps{k}^p$. 

To state the affine version of this theorem, let $\mu_{p^\infty}(R)$ denote the abelian group of $p$-power torsion elements in $R^\times$, for a $k$-algebra $R$.
\begin{customthm}{A} \label{thm:dieudonneoffreeaffine}
Let $k$ be a perfect field of characteristic $p$ and $R$ a $k$-algebra. Then there is a natural isomorphism
\[
D(\Fr(R)) \cong CW^u(R) \oplus \left(\mu_{p^\infty}(R\otimes_k \bar k) \otimes W(\bar k)\right)^{\Gal(k)}
\]
where in the last factor, invariants of the absolute Galois group $\Gal(k)$ acting diagonally on $\bar k$ and $W(\bar k)$ are taken.
\end{customthm}
Again, Thm.~\ref{thm:freeaffinegroupfactorization} follows almost from this theorem; the missing piece in this case is to show that the second summand on the right hand side factors through $p$-polar algebras.

\subsection*{Overview}

In Section~\ref{sec:p-polar}, we define $p$-polar $k$-algebras and their properties. Section~\ref{sec:witt} develops the basic theory of $p$-adic Witt vectors for $p$-polar rings and contains a proof of Thm.~\ref{thm:Wittofpolar}. Section~\ref{sec:free} contains a review of the structure of the categories $\AbSch{k}$ and $\FGps{k}$ along with the proofs of Lemmas~\ref{lemma:freepadicgroup} and \ref{lemma:freeformalgroup}. Finally, Section~\ref{sec:dieudonne} contains the setup of the Dieudonné functors and the proofs of Thms.~\ref{thm:dieudonneoffreeformal}, \ref{thm:dieudonneoffreeaffine}, \ref{thm:freeaffinegroupfactorization}, and \ref{thm:freeformalgroupfactorization}.

\subsection*{Acknowledgement}
I would like to thank the anonymous referee for exceptionally constructive and encouraging feedback. Without their criticism, this paper would have been much less readable.

\section{\texorpdfstring{$p$-polar rings}{p-polar rings}}\label{sec:p-polar}
Recall the definition of a $p$-polar algebra $A$ from the introduction. We make the following observations:

\begin{itemize}
\item The definition is non-unital in nature. If one were to require the existence of an element $1 \in A$ such that $\mu(1,\dots,1,x)=x$, this would make $A$ into a commutative unital ring.
\item If $p=2$ then $A$ is a nonunital algebra.
\item The expression in (ASSOC) is $(\Sigma_p \times \Sigma_{p-1})$-equivariant by definition. Given commutativity, the condition is akin to an associative law. 
\end{itemize}

\begin{example}
The multiplication on an ordinary (not necessarily unital) ring $A$ restricts to polar ring structure $\pol(A)$. 
\end{example}

\begin{example}
We have that $k[x]_{(j)} =_{\text{def}} k\langle x^{j(1+(p-1)i)} \mid i \geq 0\rangle$ is a polar subalgebra of $k[x]$ for each $j \geq 0$, and it  is not an algebra (unless $j=0$, in which case it is just $k$, or $j>0$ and $p=2$, in which case it is an algebra without unity).
\end{example}

\begin{example}
Let $A = xk[x]/(x^p)$ and $B = k^{p-1}$ nonunital with trivial multiplication. Then $\mu=0$, and hence $\pol(A) \cong \pol(B)$ as $p$-polar algebras. 
\end{example}

\begin{remark}\label{remark:unity}
For a unital algebra $A$, one can recover $A$ from $\pol(A)$ up to non-canonical isomorphism. Indeed, if $B$ is a $p$-polar algebra of the form $\pol(A)$, there is an element $e$ such that $\mu(e,\dots,e,x)=x$ for all $x \in B$. For instance, the unity of $A$ is such an element, but $B$ does not preserve that information. One easily checks that an algebra structure on $B$ can be defined by $x \cdot y = \mu(e,\dots,e,x,y)$. With this algebra structure, multiplication by $e$ gives an algebra isomorphism $A \to B$.

For nonunital algebras, this is not possible, as the previous example illustrates.
\end{remark}

\begin{prop}\label{prop:onlyonemultiplication}
Let $A$ be a $p$-polar $k$-algebra and $x_1,\dots,x_n \in A$. Then there is at most one way to multiply $x_1,\dots,x_n$ together using $\mu$, and the product exists if and only if $n \equiv 1 \pmod {p-1}$.
\end{prop}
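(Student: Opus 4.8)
The plan is to encode a ``way of multiplying'' $x_1,\dots,x_n$ as a rooted tree in which every internal node has exactly $p$ children and the $n$ leaves are labelled by the $x_i$; an internal node represents an application of $\mu$ to the values of its $p$ subtrees, the internal ordering being immaterial by symmetry of $\mu$. A product of $x_1,\dots,x_n$ is then the common value of all such trees, provided they agree. Existence is the easy half: a full $p$-ary tree with $I$ internal nodes and $L$ leaves has $L+I-1=pI$ (counting edges two ways), so $L=(p-1)I+1$ and any product forces $n\equiv 1\pmod{p-1}$; conversely, when $n\equiv 1\pmod{p-1}$ the ``left comb'' tree exhibits a product.

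For uniqueness I would introduce the left comb explicitly as a function $\Phi$ on sequences of length $\equiv 1\pmod{p-1}$, defined by $\Phi(y_1)=y_1$ and $\Phi(y_1,\dots,y_m)=\mu(\Phi(y_1,\dots,y_{m-p+1}),y_{m-p+2},\dots,y_m)$, and prove by strong induction on $n$ that every tree on leaves $y_1,\dots,y_n$ evaluates to $\Phi(y_1,\dots,y_n)$ and that $\Phi$ is symmetric in its arguments. The inductive step for a tree whose root has subtrees with leaf-sequences $L_1,\dots,L_p$ reduces, after applying the induction hypothesis to the (strictly smaller) subtrees, to the combination identity
\[
\mu(\Phi(L_1),\dots,\Phi(L_p)) = \Phi(L_1\cdots L_p),
\]
for the concatenated sequence $L_1\cdots L_p$, so the whole argument hinges on establishing this identity together with the symmetry of $\Phi$.

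Both facts are where the full $\Sigma_{2p-1}$-invariance in (ASSOC) — not merely the automatic $\Sigma_p\times\Sigma_{p-1}$-symmetry — does the work, and this is the main obstacle. For symmetry of $\Phi$ at level $n$ I would unfold the first argument once more to write $\Phi(y_1,\dots,y_n)$ as a depth-two expression with $2p-1$ leaves; (ASSOC) then permits transposing the two leaves straddling the inner/outer boundary, and combined with the symmetries of the two blocks (from the induction hypothesis at smaller size and from symmetry of $\mu$) these transpositions generate all of $\Sigma_n$. For the combination identity I would peel the last $p-1$ entries off a block $L_i$ of size $>1$, use (ASSOC) to regroup the resulting depth-two expression so that the $\Phi(L_j)$ all sit inside a single inner $\mu$, apply the induction hypothesis (the identity at the smaller size $n-(p-1)$) to collapse that inner $\mu$ into one $\Phi$, and finally recognise the outcome as $\Phi(L_1\cdots L_p)$ using the symmetry of $\Phi$ just established at level $n$. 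Threading the symmetry and combination statements through one strong induction on $n$, so that each is available at exactly the smaller sizes where it is invoked, is the delicate bookkeeping point to get right.
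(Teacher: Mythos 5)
Your proposal is correct and follows essentially the same route as the paper: both encode a product as a full $p$-ary tree (giving the congruence $n\equiv 1\pmod{p-1}$ by the same edge count), reduce every tree to the left-associative ``comb'' normal form using symmetry of $\mu$ together with the full $\Sigma_{2p-1}$-invariance of (ASSOC), and then observe that the leaves of the comb can be freely permuted. Your version merely makes explicit, via the recursion $\Phi$ and the interleaved strong induction on the symmetry and combination identities, the details that the paper's proof leaves to the reader.
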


\begin{proof}
Let us first make the statement more rigorous. Define a \emph{multiplication scheme} to be a rooted, planar, $p$-ary tree whose leaves are labelled with $x_1,\dots,x_n$. If a multiplication scheme exists, then it gives a prescription of how to multiply the elements $x_i$ by traversing the tree, applying $\mu$ at every internal vertex. If no multiplication scheme exists, then the elements cannot be multiplied together. Since grafting a basic $p$-ary tree of depth $1$ onto an existing tree increases the number of leaves by $p-1$, a multiplication scheme exists iff $n \equiv 1 \pmod {p-1}$.

An equivalence relation on the set of all multiplication schemes is generated by:
\begin{enumerate}
	\item $M' \sim M$ if $M'$ results from $M$ by permuting the outgoing edges of any internal vertex;
	\item $M' \sim M$ if $M'$ results from $M$ by permuting the $2p-1$ outgoing edges of a subtree of the form, along with their subtrees (illustrated for $p=3$).
	\[
	\begin{tikzpicture}[every node/.style = {shape=circle,draw,align=center}]
	\node {}
	 child { node {}
	 	child { node {}}
	 	child { node {}}
	 	child { node {}}
	 	}
	 child { node {}}
	 child { node {}};
	 \end{tikzpicture}
	\]
\end{enumerate}
These relations correspond, of course, to the symmetry and axiom (ASSOC). Under this equivalence relation, every multiplication scheme is equivalent to a ``left-associative'' one, i.e. a scheme where a vertex has a non-leaf subtree only if it is the leftmost among its siblings, such as in the diagram above. In such a left-associative multiplication scheme, all leaves can be permuted without changing the equivalence class. Thus all multiplication schemes are equivalent.
\end{proof}

In light of this proposition, we will unambiguously use monomial notations such as $x^p$ for $\mu(x,\dots,x)$ or $xy^{p-1}$ for $\mu(x,y,\dots,y)$ in polar rings.

\begin{example}\label{ex:free-p-polar}
Let $S$ be a set. The free $p$-polar ring $P(S)$ on $S$ is given by the sub-$p$-polar algebra of the polynomial ring $\Z[S]$ with generators in $S$ spanned by monomials of length congruent to $1$ modulo $p-1$ (or spanned by all nonconstant monomials if $p=2$).
\end{example}

\begin{numbereddefn} \label{defn:ideal}
An \emph{ideal} in a $p$-polar ring $A$ is a subgroup $I$ such that $\mu(a_1,\dots,a_{p-1},i) \in I$ whenever $i \in I$. These are exactly the kernels of homomorphisms of $p$-polar rings. For a subset $S \subseteq A$, the ideal $(S)$ generated by it is defined to be the smallest ideal containing $S$. If $I$ is an ideal, then $I^p=\langle \mu(I,\dots,I)\rangle$ is a subideal.
\end{numbereddefn}

\section{\texorpdfstring{Witt vectors of $p$-polar rings}{Witt vectors of p-polar rings}}\label{sec:witt}

The ($p$-typical) Witt vector functor \cite{witt:witt-vectors} $W$ and its truncated variants $W_n$ take values in rings and are defined on the category of rings. Since $W_1(A) \cong A$, no information of the input ring is lost. However, if one is only interested in $W(A)$ as an abelian group with Frobenius and Verschiebung operations, one can ask what the minimal required structure on $A$ is. A ring structure is enough; an abelian group structure alone is not. It turns out that the structure is exactly that of a $p$-polar ring. Background on Witt vectors and related constructions can be found in \cite{hesselholt:witt-survey,hazewinkel:witt,serre:corps-locaux}.

\begin{defn}
For $0 \leq n \leq \infty$ and a $p$-polar ring $A$, define its set of Witt vectors by
\[
W_n(A) = \prod_{i=0}^{n-1} A \quad \text{(and $W(A)=W_\infty(A)$.)}
\]
\end{defn}

Just as in the classical case, there is a ghost map
\begin{equation}\label{eq:ghost}
w\colon W_n(A) \to \prod_{i=0}^{n-1} A \qquad (0 \leq n \leq \infty)
\end{equation}
given by
\[
w(a_0,a_1,\dots) = (a_0,a_0^p+pa_1,a_0^{p^2}+pa_1^p+p^2a_2,\dots).
\]

We will make use of the following version of Dwork's lemma for polar rings:
\begin{lemma}[Dwork] \label{lemma:dwork}
Let $A$ be a $p$-polar ring and $0 \leq n \leq \infty$. Assume that there is a polar ring map $\phi\colon A \to A$ such that $\phi(a) \equiv a^p \pmod{p}$. Then a sequence $(x_0,x_1,\dots) \in \prod_{i=0}^{n-1} A$ is in the image of $w$ iff $x_m \equiv \phi(x_{m-1}) \pmod{p^m}$ for all $m \geq 1$.
\end{lemma}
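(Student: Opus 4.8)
The plan is to follow the classical recursive proof of Dwork's lemma, the only genuinely new ingredient being that every monomial must be read through Proposition~\ref{prop:onlyonemultiplication}. Writing the $m$-th ghost component as $w_m(a_0,a_1,\dots) = \sum_{i=0}^m p^i a_i^{p^{m-i}}$, I first note that each power $a_i^{p^{m-i}}$ is an unambiguous iterated $\mu$-product: its length $p^{m-i}$ is $\equiv 1 \pmod{p-1}$, so the product exists and is unique. Hence $w$ is well-defined on $W_n(A)$, and because $\phi$ is a map of $p$-polar rings it commutes with every such product, is additive, and commutes with multiplication by the integers $p^i$. Throughout, $u \equiv v \pmod{p^s}$ is read as $u-v \in p^s A$.

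The technical heart is a congruence-raising sub-lemma: if $u \equiv v \pmod{p^s}$ with $s \geq 1$, then $u^p \equiv v^p \pmod{p^{s+1}}$. To prove it I write $u = v + p^s c$ and expand $u^p = \mu(v+p^sc,\dots,v+p^sc)$ using the symmetry and $k$-multilinearity of $\mu$, which yields $\sum_{k=0}^p \binom{p}{k} p^{sk}\, \mu(v,\dots,v,c,\dots,c)$, where the $k$-th term has $p-k$ entries $v$ and $k$ entries $c$ and is thus a \emph{single} application of $\mu$. The $k=0$ term is exactly $v^p$; for $k=1$ the scalar $p\cdot p^s = p^{s+1}$ annihilates the term modulo $p^{s+1}$; and for $k\geq 2$ one checks $\binom{p}{k}p^{sk}\equiv 0 \pmod{p^{s+1}}$ using that $p\mid \binom{p}{k}$ for $1\leq k\leq p-1$ (odd $p$) together with $sk\geq s+1$, treating $p=2$ by hand. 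Iterating from the hypothesis $\phi(a_i)\equiv a_i^p \pmod p$ gives $\phi(a_i)^{p^{t}} \equiv a_i^{p^{t+1}} \pmod{p^{t+1}}$ for all $t\geq 0$, again invoking Proposition~\ref{prop:onlyonemultiplication} to identify $(a_i^p)^{p^t}$ with $a_i^{p^{t+1}}$.

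For the forward direction, suppose $(x_j)=w(a_j)$. Since $\phi$ commutes with the $p$-polar operations and with the scalars $p^i$, one has $\phi(x_{m-1}) = \sum_{i=0}^{m-1} p^i \phi(a_i)^{p^{m-1-i}}$. Applying the iterated sub-lemma with $t=m-1-i$ gives $\phi(a_i)^{p^{m-1-i}} \equiv a_i^{p^{m-i}} \pmod{p^{m-i}}$, and multiplying by $p^i$ upgrades the modulus to $p^m$. Summing over $i=0,\dots,m-1$ yields $\phi(x_{m-1}) \equiv \sum_{i=0}^{m-1} p^i a_i^{p^{m-i}} \pmod{p^m}$, which differs from $x_m = \sum_{i=0}^{m} p^i a_i^{p^{m-i}}$ only by the term $p^m a_m$; hence $x_m \equiv \phi(x_{m-1}) \pmod{p^m}$.

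For the converse I construct a preimage recursively: set $a_0=x_0$, and assuming $a_0,\dots,a_{m-1}$ with $w_j=x_j$ for $j<m$, I seek $a_m$ with $p^m a_m = x_m - \sum_{i=0}^{m-1} p^i a_i^{p^{m-i}}$. Running the forward computation on the partial ghost $x_{m-1}=\sum_{i=0}^{m-1}p^i a_i^{p^{m-1-i}}$ shows the right-hand side is $\equiv x_m - \phi(x_{m-1}) \pmod{p^m}$, which vanishes mod $p^m$ by hypothesis; thus it lies in $p^m A$ and some $a_m$ exists. No torsion-freeness is needed, since only existence (not uniqueness) of the preimage is claimed, and the argument is uniform in $n$ as it proceeds coordinate by coordinate up to index $n-1$. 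The one place demanding care, and the main obstacle, is the sub-lemma: with only the $p$-ary operation $\mu$ available, the usual binomial expansion has to be justified via multilinearity and the divisibility of $\binom{p}{k}$ verified so that each mixed monomial remains a legitimate single $\mu$-product.
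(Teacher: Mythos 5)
Your proposal is correct and follows essentially the same route as the paper's proof: the same congruence-raising step ($u\equiv v \pmod{p^s}$ implies $u^p\equiv v^p\pmod{p^{s+1}}$) proved by multilinear expansion of $\mu$, the same comparison of $\phi(w_{m-1})$ with $w_m$ for the forward direction, and the same coordinate-by-coordinate recursive construction of a preimage for the converse. Your extra care in justifying the $k\geq 2$ binomial terms and in invoking Proposition~\ref{prop:onlyonemultiplication} to make the monomials well-defined only makes explicit what the paper leaves implicit.
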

The classical proof, e.g. as in \cite[Lemma~1]{hesselholt:witt-survey}, works almost without changes. For the reader's convenience, we include it here.

\begin{proof}
As a first step, we show that $a^{p} \equiv b^{p} \pmod{p^{m+1}}$ if $a \equiv b \pmod{p^m}$. Indeed, if $b=a+p^m d$ then
\[
b^p = (a+p^m d)^p = a^p + \sum_{i=1}^p \binom{p}{i} p^{mi} a^{p-i} d^i
\]
by multilinearity and symmetry of $\mu$. Since $\binom{p}{1}=p$, then $p^{m+1} \mid b^p-a^p$.

In particular, $\phi(a)^{p^m} \equiv a^{p^m} \pmod {p^{m+1}}$ by induction. Now, since $\phi$ is a homomorphism of polar rings, we have that
\[
\phi(w_{m-1}(a)) = \sum_{i=0}^{m-1} p^i \phi(a_i)^{p^{m-1-i}} \equiv \sum_{i=0}^m p^i a_i^{p^{m-i}} = w_m(a)\pmod{p^m}.
\]
This shows that if $(x_i)_{0 \leq i \leq n}$ is in the image of $w$ then it satisfies the stated congruence. Conversely, suppose the congruence holds. Construct $a_i$ inductively by first choosing $a_0=x_0$. Having constructed $a_0,\dots,a_{m-1}$, observe that
\[
D_m = x_m - \sum_{i=0}^{m-1} p^i a_i^{p^{m-i}} \equiv 0 \pmod{p^m}.
\]
Thus choose $a_m$ such that $p^m a_m = D_m$.
\end{proof}

\begin{lemma}\label{lemma:Wittofppolar}
Let $A$ be a $p$-polar ring and $1 \leq n \leq \infty$.
\begin{enumerate}
\item There is a unique natural $p$-polar ring structure on $W_n(A)$ making the ghost map \eqref{eq:ghost} into a $p$-polar ring map. \label{lemma:Wittofppolar:ppolarstructure}
\item \label{lemma:Wittofppolar:FV} There are unique natural additive maps $F\colon W_{n+1}(A) \to W_n(A)$ (Frobenius) and $V\colon W_{n}(A) \to W_{n+1}(A)$ (Verschiebung) such that the following diagram commutes ($x_{-1}=0$ by convention):
\[
\begin{tikzcd}[column sep={10em,between origins}]
W_{n}(A) \arrow[d,"w"] \arrow[r,"V"] & W_{n+1}(A) \arrow[d,"w"] \arrow[r,"F"] & W_{n}(A) \arrow[d,"w"]\\
\prod_{i=0}^{n-1} A \arrow[r,"(x_i)_i \mapsto (px_{i-1})"] &
\prod_{i=0}^{n} A \arrow[r,"(x_i)_i \mapsto (x_{i+1})_i"] & \prod_{i=0}^{n-1} A
\end{tikzcd}
\]
Explicitly, $V(a_0,\dots,a_{n-1}) = (0,a_0,\dots,a_{n-1})$ and $F$ is uniquely determined by $F(\underline a) = \underline{a^p}$ and $FV=p$, where $\underline a = (a,0,\dots,0)$ denotes the Teichm\"uller representative of $a \in A$ in $W_n(A)$.
\item \label{lemma:Wittofppolar:algebra} If $A$ is a $p$-polar algebra over a perfect field $k$ of characteristic $p$ then $W_n(A)$ is a $p$-polar $W(k)$-algebra. Denoting the Frobenius $F$ on $W(k)$ by $\frob$ to distinguish it from the Frobenius on $W_n(A)$, we have that
\begin{equation}\label{eq:FVscalarcommutation}
 Fa=\frob(a)F \quad \text{and} \quad Va=\frob^{-1}(a)V \quad \text{for $a \in W(k)$}.
 \end{equation}
\end{enumerate}
\end{lemma}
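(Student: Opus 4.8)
The plan is to reduce every assertion to the case of a $p$-torsion-free $p$-polar ring, where the ghost map $w$ is injective and its image is exactly characterized by Dwork's lemma~\ref{lemma:dwork}. The universal such objects are the free $p$-polar rings $P(S)$ of Example~\ref{ex:free-p-polar}: each is $p$-torsion-free because it sits inside a polynomial ring over $\Z$, and it carries a Frobenius lift $\phi$, namely the restriction of the ring endomorphism of $\Z[S]$ sending every generator $s$ to $s^p$. This $\phi$ preserves $P(S)$ because it multiplies monomial lengths by $p$ and $p \equiv 1 \pmod{p-1}$, and it satisfies $\phi(a) \equiv a^p \pmod p$, so Dwork's lemma applies. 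Since every $p$-polar ring is a quotient of some $P(S)$ and all structure maps below will be given by universal integer-coefficient $p$-polar expressions, it suffices to construct and verify them on the free rings and then transport them by naturality.

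For part~\ref{lemma:Wittofppolar:ppolarstructure} I would take for $P$ the free $p$-polar ring on the entry variables $\{X^{(j)}_l\}_{1\le j\le p,\ l\ge 0}$, so that $W_n(P)=\prod_i P$ with $w$ injective. Equip $\prod_{i=0}^{n-1}A$ with its componentwise $p$-polar structure; the operations sought on $W_n(A)$, namely Witt addition and the $p$-fold product $\mu$, are prescribed on ghosts to be componentwise addition and componentwise $\mu$, so existence on $W_n(P)$ amounts to closure of the image of $w$ under these componentwise operations. Closure under addition is immediate from additivity of $\phi$; closure under $\mu$ is the key Dwork check: if $x^{(1)},\dots,x^{(p)}$ satisfy $x^{(j)}_m \equiv \phi(x^{(j)}_{m-1})\pmod{p^m}$, then for $y_m=\mu(x^{(1)}_m,\dots,x^{(p)}_m)$ the multilinearity of $\mu$ lets one replace each argument modulo $p^m$ (as in the opening step of the proof of~\ref{lemma:dwork}), and the fact that $\phi$ commutes with $\mu$ gives $y_m \equiv \mu(\phi(x^{(1)}_{m-1}),\dots,\phi(x^{(p)}_{m-1})) = \phi(y_{m-1}) \pmod{p^m}$. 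The resulting operations are elements of $P$, i.e.\ universal $p$-polar expressions in the $X^{(j)}_l$; that the Witt addition law is itself expressible purely through $\mu$ is explained by weight-homogeneity, since every monomial of the classical addition polynomial has length $\equiv p^i \equiv 1 \pmod{p-1}$. All $p$-polar ring axioms hold on $W_n(P)$ because they hold after the injective map $w$, and specializing the universal expressions along the unique map $P\to A$ yields a natural structure on every $W_n(A)$. Uniqueness for arbitrary $A$ follows because a natural structure making each $w$ a $p$-polar map is determined by its universal instances, which are forced on the $p$-torsion-free $P$ by injectivity of $w$.

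For part~\ref{lemma:Wittofppolar:FV} I would define $V$ by the stated shift formula and check directly that its ghost is $(x_i)_i\mapsto(px_{i-1})_i$; additivity of $V$ then follows from naturality of Witt addition under the shift. For $F$ the prescribed ghost is the shift $(x_i)_i\mapsto(x_{i+1})_i$, whose values land in the image of $w$ because the Dwork congruence for $x$ at level $m+1$ implies the one for the shifted vector at level $m$; hence $F$ exists on the torsion-free models and so naturally. Uniqueness of both maps is forced by the commuting ghost square together with injectivity of $w$ on $P$, and the identities $F(\underline a)=\underline{a^p}$ and $FV=p$ are then routine ghost computations. For part~\ref{lemma:Wittofppolar:algebra} the $W(k)$-module structure is obtained the same way: on ghosts $\lambda\in W_n(k)$ acts by componentwise scalar multiplication $w(\lambda)_m\cdot w(\alpha)_m$ using the $k$-module structure of $A$, and Dwork shows this comes from a natural action of $W(k)$, through $W(k)\to W_n(k)$, by universal $p$-polar expressions; $W(k)$-multilinearity of $\mu$ again follows on ghosts. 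The semilinearity relations reduce to the single identity $w(a)_{m+1}=w(\frob a)_m$ for $a\in W(k)$, which holds because $k$ is perfect, so $\frob$ on $W(k)$ is $(a_i)_i\mapsto(a_i^p)_i$; comparing ghost components of $F(a\alpha)$ and $\frob(a)F(\alpha)$ yields $Fa=\frob(a)F$, and the dual computation via the projection formula $\alpha\,V(\beta)=V(F(\alpha)\beta)$ yields $Va=\frob^{-1}(a)V$, where $\frob^{-1}$ exists precisely because $k$ is perfect.

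The main obstacle is the congruence propagation in part~\ref{lemma:Wittofppolar:ppolarstructure}: everything hinges on the fact that a symmetric multilinear $\mu$ preserves congruences modulo $p^m$ and commutes with the Frobenius lift $\phi$, which is exactly what allows Dwork's lemma to run for the componentwise product. Once this is in place, the remainder is bookkeeping with universal $p$-polar polynomials and ghost-component identities; the only place where the hypotheses on $k$ are genuinely used is the perfectness of $k$ in part~\ref{lemma:Wittofppolar:algebra}, needed both to describe $\frob$ on $W(k)$ explicitly and to invert it for the Verschiebung relation.
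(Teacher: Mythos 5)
Your proposal follows the paper's proof essentially verbatim: reduce to the free $p$-polar ring $P(a_{i,j})$ with its Frobenius lift $\phi(a_{i,j})=a_{i,j}^p$, use Dwork's lemma to show that the componentwise ghost operations land in the image of $w$, conclude from injectivity of $w$ in the torsion-free universal case, and transport to general $A$ by naturality. The paper dispatches parts (2) and (3) as ``similar arguments,'' and your added details --- the congruence propagation through the multilinear $\mu$, the observation that the addition polynomials are $p$-polar by weight-homogeneity, and the ghost-component identities for $F$, $V$, and the semilinear relations --- are correct fillings-in of exactly that outline.
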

Note that $\frob$ is bijective since $k$ is perfect.
\begin{proof}
For \eqref{lemma:Wittofppolar:ppolarstructure}, the classical proof (cf. \cite[Prop.~2]{hesselholt:witt-survey}) works. First consider the free $p$-polar ring $P = P(a_{i,j} \mid 0 \leq i < n, 1 \leq j \leq p)$ of Ex.~\ref{ex:free-p-polar} spanned by polynomials of degree congruent to $1$ modulo $p-1$ and the $p$-polar ring map $\phi\colon P \to P$ with $\phi(a_{i,j})=a_{i,j}^p$. Write $a_j$ for the  sequence $( a_{i,j} \mid 0 \leq i < n)$. By Lemma~\ref{lemma:dwork}, $w(a_1)+w(a_2)$, $-w(a_1)$, and $w(a_1)w(a_2)\cdots w(a_p)$ are in the image of $w$. Since $P$ is torsion free, $w$ is injective. Thus this defines a unique $p$-polar ring structure on $W_n(P)$. The case for arbitrary $A$ follows from naturality.
Assertions \eqref{lemma:Wittofppolar:FV} and \eqref{lemma:Wittofppolar:algebra} follow from similar arguments, considering the universal, torsion-free case first and using naturality to deduce the general case.
\end{proof}

Now let $k$ be a perfect field of characteristic $p$. Denote by $\mathcal R = W(k)\langle F,V\rangle/(FV-p)$ the noncommutative ring obtained from $W(k)$ by adjoining two variables $F$, $V$ such that $FV=VF=p$ and such that commutation with scalars is governed by \eqref{eq:FVscalarcommutation}. Then for a $p$-polar $k$-algebra $A$ and $n \leq \infty$, $W_n(A)$ becomes an $\mathcal R$-module  by Lemma~\ref{lemma:Wittofppolar}.

As in \cite[Ch. II]{fontaine:groupes-divisibles} or \cite[\textsection 5.3]{bauer-carlson:tensorproduct}, we define the group of unipotent co-Witt vectors as the colimit
\[
CW^u(A) = \colim(W_0(A) \xrightarrow{V} W_1(A) \xrightarrow{V} \cdots)
\]
This works when $A$ is merely a $p$-polar ring by Lemma~\ref{lemma:Wittofppolar}.

Define the set of co-Witt vectors as
\[
CW(A) = \{(a_i) \in A^{\Z_{\leq 0}}\mid (a_{-r},a_{-r-1},\dots)^{p^s}=0 \text{ for some } r,s \geq 0\},
\]
where $(a_{-r},a_{-r-1},\dots)$ denotes the ideal (cf. Def.~\ref{defn:ideal}) in $A$ generated by the given elements.
This functor (as a functor on finite-dimensional $k$-algebras) is in fact a formal group \cite[\textsection II.4]{fontaine:groupes-divisibles}, i.e. ind-representable.

To see that $CW(A)$ has the structure of an $\mathcal R$-module even when $A$ is just a $p$-polar ring, containing $CW^u(A)$ as a submodule, we use and adapt the arguments of \cite[\textsection II.1.5]{fontaine:groupes-divisibles}.

Define the polynomial $S_n(x_0,\dots,x_n,y_0,\dots,y_n)$ as the $n$th component of the Witt vector addition $(x_0,x_1,\dots,x_n) + (y_0,y_1,\dots,y_n) \in W(\Z[x_0,\dots,x_n,y_0,\dots,y_n])$. Note that this polynomial is in fact an element of the free $p$-polar ring (cf. Ex.~\ref{ex:free-p-polar}) $P(x_1,\dots,x_n,y_1,\dots,y_n)$, and hence can be evaluated on elements of $p$-polar rings.

\begin{prop}
Let $A$ be a $p$-polar ring and $a=(a_i)$, $b=(b_i) \in CW(A)$.  Then
\begin{enumerate}
	\item For each $n \geq 0$, the sequence
	\[
	S_m(a_{-n-m},\dots,a_{-n},b_{-n-m},\dots,b_{-n})
	\]
	 is eventually constant as $m \to \infty$. Let us call the limit value $S_{-n}(a,b)$.
	 \item $S_{-n}(a,b) \in CW(A)$.
\end{enumerate}
\end{prop}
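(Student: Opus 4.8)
The plan is to establish part (1) first, from which part (2) follows easily; the crux is a weight-grading estimate on the addition polynomials $S_m$ interacting with the nilpotence built into $CW(A)$. First I would record the polynomial identity expressing that Verschiebung is additive. Since $V$ is additive by Lemma~\ref{lemma:Wittofppolar} and $V(u_0,\dots,u_m)=(0,u_0,\dots,u_m)$, comparing the $(m{+}1)$st components of $V\underline u+V\underline v$ and of $V(\underline u+\underline v)$ yields the identity
\[
S_{m+1}(0,u_0,\dots,u_m,0,v_0,\dots,v_m)=S_m(u_0,\dots,u_m,v_0,\dots,v_m)
\]
in the free $p$-polar ring. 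Writing $T_m=S_m(a_{-n-m},\dots,a_{-n},b_{-n-m},\dots,b_{-n})$, this identity lets me express $T_m$ as an $S_{m+1}$ with a leading $0$ prepended, so that
\[
T_{m+1}-T_m=S_{m+1}(a_{-n-m-1},\vec a,b_{-n-m-1},\vec b)-S_{m+1}(0,\vec a,0,\vec b),
\]
where $\vec a=(a_{-n-m},\dots,a_{-n})$ and $\vec b=(b_{-n-m},\dots,b_{-n})$. In other words, the difference between consecutive terms of the sequence is exactly the effect of toggling the two weight-one input variables of $S_{m+1}$ from $(a_{-n-m-1},b_{-n-m-1})$ to $(0,0)$.

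The heart of the argument, and the step I expect to be the main obstacle, is to show this difference vanishes for all large $m$. I would fix a common nilpotence bound, choosing $r,s$ so that the ideal $J=(a_{-r},a_{-r-1},\dots,b_{-r},b_{-r-1},\dots)$ satisfies $J^{p^s}=0$ (a sum of nilpotent tail ideals is again nilpotent). For $m$ large, all input slots of $S_{m+1}$ of index $>j:=m+1-r+n$ are free while the remaining $j{+}1$ low slots (including the two toggled ones) evaluate into $J$; note $j\to\infty$ and the number $r-n$ of free slots is independent of $m$. Now use the weight grading in which $x_i,y_i$ have weight $p^i$ and $S_{m+1}$ is homogeneous of weight $p^{m+1}$. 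Each free variable has weight divisible by $p^{j+1}$, so any monomial of $S_{m+1}$ that actually involves a weight-one variable must have positive total low-weight divisible by $p^{j+1}$, hence at least $p^{j+1}$; a short integer optimization (pushing one unit of weight down a level multiplies its degree by $p$, i.e.\ costs $p-1$ per level) then shows such a monomial has degree at least $p+(p-1)j$ in the low variables. Only the growth of this bound matters.

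Since the low variables evaluate into $J$, such a monomial lands in $J^{\,p+(p-1)j}\subseteq J^{p^s}=0$ as soon as $m$ (hence $j$) is large enough. Here I would also note that a product with at least $p^s$ factors in $J$ lies in $J^{p^s}$, which follows from the unambiguity of products (Prop.~\ref{prop:onlyonemultiplication}) together with the ideal axiom (Def.~\ref{defn:ideal}). Consequently every toggled monomial vanishes and $T_{m+1}=T_m$ for all large $m$, proving the eventual constancy in part~(1); the stable value is by definition $S_{-n}(a,b)$.

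For part~(2), I would show the sequence $\bigl(S_{-n}(a,b)\bigr)_{n\ge 0}$ has nilpotent tail. For $n\ge r$ every argument of $S_m(a_{-n-m},\dots,a_{-n},b_{-n-m},\dots,b_{-n})$ has index $\le -r$ and so lies in $J$; since $S_m$ lies in the free $p$-polar ring it has no constant term (Ex.~\ref{ex:free-p-polar}), whence its value lies in $J$. As the sequence is eventually constant by part~(1), its limit satisfies $S_{-n}(a,b)\in J$ for all $n\ge r$. Therefore the tail ideal $\bigl(S_{-r}(a,b),S_{-r-1}(a,b),\dots\bigr)$ is contained in $J$ and hence annihilated by $(-)^{p^s}$, exhibiting $S_{-n}(a,b)$ as an element of $CW(A)$.
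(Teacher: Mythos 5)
Your proof is correct, but it takes a genuinely different route from the paper. The paper does not re-derive convergence at all: it observes that the addition polynomials $S_m$ lie in the free $p$-polar subring $P$ of the universal \emph{commutative} ring $R=\Z[x_i,y_i]/(x_{-r},\dots,y_{-r},\dots)^{p^s}$, invokes Fontaine's Prop.~II.1.1 (the statement for commutative rings) to get eventual constancy and membership in $CW(R)$, notes that the limit already lies in $CW(P)$, and then pushes forward to an arbitrary $A$ by naturality of the universal case. You instead reprove the stabilization from scratch: the identity $S_{m+1}(0,\vec u,0,\vec v)=S_m(\vec u,\vec v)$ reduces the question to monomials of $S_{m+1}$ involving the weight-one variables, and the isobaric grading (weight $p^{m+1}$, with free slots contributing weight divisible by $p^{j+1}$) forces such monomials to have low-variable degree at least $p+(p-1)j\to\infty$, hence to die in $J^{p^s}$. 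This is in effect a $p$-polar re-proof of Fontaine's lemma; it buys self-containedness and an explicit mechanism for the convergence, at the cost of length and of two fiddly sublemmas you assert but do not prove: that a product of $N$ factors, at least $p^s$ of which lie in an ideal $J$, lies in $J^{p^s}$ (this needs the partition $d=d_1+\cdots+d_p$ with each $d_i\equiv 1\pmod{p-1}$ and $d_i\ge p^{s-1}$, using Prop.~\ref{prop:onlyonemultiplication} to re-associate), and that a sum of two nilpotent tail ideals is nilpotent (which the paper also uses implicitly when choosing a common $r,s$). Both are true and routine, but they are the places where your write-up still owes detail; the paper's reduction avoids them by inheriting everything from the commutative case.
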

\begin{proof}
This is proved for commutative rings in \cite[Prop. II.1.1]{fontaine:groupes-divisibles}.

Since $a$, $b \in CW(A)$, there exist natural numbers $r,s$ such that
\[
(a_{-r},a_{-r-1},\dots,b_{-r},b_{-r-1},\dots)^{p^s}=0. 
\]
Let
\[
R=\Z[x_i,y_i \mid i \leq 0]/(x_{-r},x_{-r-1},\dots,y_{-r},y_{-r-1},\dots)^{p^s}.
\]
Then by \cite[Prop. II.1.1]{fontaine:groupes-divisibles}, the sequence $S_m(x_{-n-m},\dots,x_{-n},b_{-n-m},\dots,b_{-n})$ is eventually constant as $m \to \infty$, and its limit lies in $CW(R)$. In fact, as observed before, it lies in $CW(P)$, where
\[
P=P(x_i,y_i \mid i \leq 0]/(x_{-r},x_{-r-1},\dots,y_{-r},y_{-r-1},\dots)^{p^s} < R.
\]
Since this is the universal case for elements $a$, $b$ with the chosen vanishing properties, the claim follows from naturality.
\end{proof}

The $\mathcal R$-module structure on $CW(A)$ is thus given by
\begin{align*}
a+b &= (\dots,S_{-2}(a,b),S_{-1}(a,b),S_0(a,b)),\\
Va &= (\dots,a_2,a_1), \quad \text{and}\\
Fa &= (\dots,a_2^p,a_1^p,a_0^p).
\end{align*}

\begin{remark}
The structure of a $p$-polar algebra is the minimal structure needed to define the Witt vector functor; indeed, the $p$-polar $k$-algebra $A$ can be reconstructed from the abelian groups $W_1(A)$ and $W_2(A)$ together with the Teichmüller map $t\colon A \to W_2(A)$, $t(a)=\underline a = (a,0)$ and the Verschiebung as follows. As an abelian group, $A = W_1(A)$. For elements $x_1,\dots,x_p \in A$, we have:
\[
\sum_{I \subseteq \{1,\dots,p\}} (-1)^{\#I} t\Bigl(\sum_{i \in I} x_i\Bigr) = (0,x_1\cdots x_p) \in W_2(A),
\]
so the $p$-polar structure can be extracted by equating $V(\tau(\mu(x_1,\dots,x_p))$ to the above sum. The reader may amuse themself by deriving the above equality from the inductively proved identity
\[
\sum_{I \subseteq \{1,\dots,k\}} (-1)^{\#I} t\Bigl(\sum_{i \in I} x_i\Bigr) = \Bigl(0,\sum_{\substack{i_1+\cdots+i_k = p\\i_1,\dots,i_k\geq 1}} \frac 1p \binom{p}{i_1,\dots,i_k} x_1^{i_1}\cdots x_k^{i_k}\Bigr).
\]
for $k \geq 2$.
\end{remark}

\begin{example}[Witt vectors of free $p$-polar algebras]
The ring $W(\F_p[x])$ is described in \cite[Exercise 1(10)]{borger:witt-vector-lectures}. It is a subring of the $p$-completed monoid ring $\Z_p[\mathbf N[\frac 1 p]]\hat{{}_p} = W(\F_p[\mathbf N[\frac 1 p]])$ consisting of those series $\sum a_{\frac n {p^i}} [\frac n {p^i}]$ such that $p^i \mid a_{\frac n {p^i}}$. The group $W(P(x))$ of the free $p$-polar algebra on one generator is the subgroup of power series where $a_{\frac n {p^i}}=0$ unless $n \equiv 1 \pmod{p-1}$.
\end{example}

\section{Free affine and formal groups} \label{sec:free}

Continue to let $k$ be a perfect field of characteristic $p$. A formal group $G$ is called \emph{connected} if its representing pro-$k$-algebra $\Reg{G}$ is a pro-local $k$-algebra, and it is called \emph{\'etale} if $\Reg{G}$ is pro-\'etale as a pro-$k$-algebra. An affine group $G$ is called \emph{unipotent} if its Cartier dual $G^*$ is connected, and \emph{of multiplicative type} if $G^*$ is \'etale.

The following splittings are classical:
\begin{thm}[{\cite[\textsection I.7]{fontaine:groupes-divisibles}}] \label{thm:splitting}
The category $\FGps{k}$ splits naturally as $\FGpse{k} \times \FGpsc{k}$, where $\FGpse{k}$ and $\FGpsc{k}$ denote the full subcategories of \'etale resp. connected formal groups.

Dually, the category $\AbSch{k}$ splits naturally as $\AbSchm{k}$ and $\AbSchu{k}$ into the product of its full subcategories of multiplicative-type and of unipotent groups.
\end{thm}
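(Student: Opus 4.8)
The plan is to prove the statement for formal groups and then to deduce the affine case by Cartier duality, since by definition a group is of multiplicative type (resp. unipotent) precisely when its Cartier dual is étale (resp. connected). Both $\FGps{k}$ and $\AbSch{k}$ are abelian, so images of morphisms and biproducts are available. To exhibit $\FGps{k}$ as a product of the full subcategories $\FGpse{k}$ and $\FGpsc{k}$, it suffices to produce, naturally in $G$, a direct-sum decomposition $G \cong G^e \times G^c$ with $G^e$ étale and $G^c$ connected, together with the vanishing $\Hom(G^e, H^c) = 0 = \Hom(G^c, H^e)$ for all étale $G^e, H^e$ and connected $G^c, H^c$. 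These two facts identify $\FGps{k}$ with the product category, with projections $G \mapsto G^e$ and $G \mapsto G^c$.

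For the decomposition I would write $G$ as a filtered colimit of finite commutative group schemes $G = \colim_i G_i$, equivalently $\Reg{G} = \lim_i \Reg{G_i}$ profinite. Each $\Reg{G_i}$ is a finite-dimensional commutative $k$-algebra, hence a finite product of local algebras; the local factor at the augmentation is the coordinate ring of the connected component, and the maximal separable subalgebra cuts out the reduced subscheme $G_i^{\mathrm{red}}$. The crucial input is that $k$ is perfect: this guarantees that $G_i^{\mathrm{red}}$ is a closed subgroup scheme and that the multiplication map $G_i^{\mathrm{red}} \times G_i^c \to G_i$ is an isomorphism, where $G_i^c$ denotes the connected component. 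This gives a canonical splitting of the connected-étale sequence with étale part $G_i^e \cong G_i^{\mathrm{red}}$. Since the formation of both $G_i^c$ and $G_i^{\mathrm{red}}$ is functorial, the decomposition is compatible with the transition maps of the ind-system, and passing to the colimit yields $G \cong G^e \times G^c$ with $G^c = \colim_i G_i^c$ connected (pro-local coordinate ring) and $G^e = \colim_i G_i^{\mathrm{red}}$ étale.

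The Hom-vanishing follows from the fact that the classes of connected and of étale formal groups are each closed under subobjects and quotients, while a formal group that is simultaneously connected and étale is trivial. Given a homomorphism $f\colon G^c \to H^e$ from a connected to an étale group, its image is at once a quotient of $G^c$, hence connected, and a subgroup of $H^e$, hence étale; therefore the image is trivial and $f = 0$. The same argument with the roles reversed gives $\Hom(G^e, H^c) = 0$. Combined with the previous paragraph, this yields the equivalence $\FGps{k} \simeq \FGpse{k} \times \FGpsc{k}$, the naturality being automatic since every step is functorial. Applying the Cartier duality anti-equivalence $G \mapsto G^*$ of the introduction, which by the definitions of multiplicative type and unipotent sends $\FGpse{k}$ to $\AbSchm{k}$ and $\FGpsc{k}$ to $\AbSchu{k}$, transports this product decomposition to $\AbSch{k} \simeq \AbSchm{k} \times \AbSchu{k}$.

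I expect the main obstacle to be the canonical splitting of the connected-étale sequence in the second paragraph, specifically verifying that over the perfect field $k$ the reduced subscheme $G_i^{\mathrm{red}}$ is a closed subgroup and that it supplies a section functorial enough to survive passage to the colimit. This is exactly where perfectness is indispensable: over a non-perfect field $G_i^{\mathrm{red}}$ need not be stable under the group law, and the sequence need not split.
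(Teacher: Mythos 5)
The paper offers no proof of this statement --- it is imported verbatim from Fontaine [\S I.7] --- so there is no in-house argument to compare against. Your overall architecture is the standard (and correct) one: a natural decomposition $G\cong G^e\times G^c$, Hom-vanishing between the \'etale and connected classes via the image argument, and transport to $\AbSch{k}$ by Cartier duality using the paper's definitions of ``unipotent'' and ``of multiplicative type''. Those parts are fine.

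The genuine gap is your very first step: you write an arbitrary $G\in\FGps{k}$ as a filtered colimit of \emph{finite commutative group schemes}. In this paper's setting a formal group is only an ind-representable functor, i.e.\ a group object in ind-(finite schemes); the terms $\Spec\Reg{G_i}$ of the ind-system are finite schemes but need not be subgroup schemes, and in general $G$ is not the union of its finite subgroup schemes. A concrete counterexample is the constant \'etale formal group $\underline{\Z}$, whose only finite subgroup scheme is trivial; note that this case actually occurs in the paper, since by Lemma~\ref{lemma:freeetale} the \'etale part of $\Fr(R)$ corresponds to the torsion-free $\Gamma$-module $\Z\langle\Hom_{\Alg_k}(R,\bar k)\rangle$. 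Consequently your crucial input --- that $G_i^{\mathrm{red}}\times G_i^{c}\to G_i$ is an isomorphism --- is not even well posed, because the individual $G_i$ carry no multiplication. The repair is to run the same argument on the profinite coordinate ring $A=\Reg{G}=\lim_i A_i$ rather than on finite group-scheme pieces: the maximal \'etale subalgebra $A^{et}=\lim_i A_i^{et}$ is a sub-(complete Hopf algebra) because $(A\mathbin{\hat\otimes}B)^{et}=A^{et}\mathbin{\hat\otimes}B^{et}$; since $k$ is perfect the projection $A\to A/\nil(A)$ restricts to an isomorphism on $A^{et}$, and $\nil$ of a completed tensor product is generated by the two factors' nilradicals, so $\Spf{A/\nil(A)}$ is an \'etale formal subgroup of $G$ splitting $G\to\pi_0(G)=\Spf{A^{et}}$. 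With that substitution the rest of your proof (Hom-vanishing and the dual statement) goes through unchanged.
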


Recall that an affine group is $p$-adic if it takes values in abelian pro-$p$-groups, and that a formal group is a formal $p$-group if it takes values in $p$-groups.

\begin{remark} \label{rem:unipotentpadic}
If $G$ is a unipotent affine group then $G$ is automatically $p$-adic; dually, if $G$ is a connected formal group then $G$ is automatically a formal $p$-group. Since these two statements are Cartier dual to one another, it suffices to show the first. An affine group is unipotent if and only if its representing Hopf algebra $H$ is \emph{conilpotent}. In particular, for each $x \in H$, there exists an $n \geq 0$ such that $V^n(x)=0$, where $V$ denotes the Hopf algebra Verschiebung. Since $[p]=FV=VF$ in any abelian Hopf algebra, with $F$ the Hopf algebra Frobenius, we have that $x \in H[p^n] = \ker([p^n])$. Thus $H \cong \colim_n H[p^n]$, and $H$ is $p$-adic.
\end{remark}

We can now show that the free $p$-adic affine group functor $\Fr$ exists:
\begin{proof}[Proof of Lemma~\ref{lemma:freepadicgroup}]
The forgetful functor from $\AbSch{k}^p$ to $\Alg_k^\op$ factors as
\[
\AbSch{k}^p \overset{J}\hookrightarrow \AbSch{k} \xrightarrow{U} \Alg_k^\op,
\]
and hence it suffices to construct left adjoints for $J$ and $U$. Since an affine group $G$ is $p$-adic iff $G \cong G\hat{{}_p} = \lim_n G/p^n$ as affine group schemes, the functor $J$ has the $p$-completion $G \mapsto G\hat{{}_p}$ as a left adjoint. On the level of Hopf algebras, the adjoint to the functor $U$ corresponds to the cofree cocommutative Hopf algebra on an algebra $A$, which was constructed in \cite[Proof of Thm.~1.3]{bauer-carlson:tensorproduct}. \end{proof}

Let us be more explicit. By Thm.~\ref{thm:splitting}, $\Fr \cong \Fr^m \times \Fr^u$ splits into a part of multiplicative type and a unipotent part. Since unipotent groups are $p$-adic (Rk.~\ref{rem:unipotentpadic}), $\Fr^u$ is represented by the cofree cocommutative conilpotent Hopf algebra on $R$, first constructed by Takeuchi \cite{takeuchi:tangent-coalgebras}. It is the Hopf algebra of symmetric tensors $\bigoplus_{n \geq 0} (R^{\otimes_k n})^{\Sigma_n}$.

A group of multiplicative type is $p$-adic if it is isomorphic to $\Spec \bar k[M]$ for a $p$-torsion group $M$ after base change to an algebraic closure $\bar k$ of $k$. In the case where $k \cong \bar k$, the free multiplicative $p$-adic group on $\Spec A$ is represented by $k[\mu_{p^\infty}(A)]$, and in the general case, it is represented by the Galois invariants (cf.~\cite[Proof of Thm. 1.3]{bauer-carlson:tensorproduct})
\[
\bar k[\mu_{p^\infty}(R\otimes \bar k)]^{\Gal(k)}.
\]  

The proof of Lemma~\ref{lemma:freeformalgroup} follows similar lines.
\begin{proof}[Proof of Lemma~\ref{lemma:freeformalgroup}]
Taking $k$-linear continuous duals of representing objects gives a commutative diagram
\[
\begin{tikzcd}
\FGps{k} \arrow[r,"U"] \arrow[d,"G \mapsto \Reg{G^*}"] & (\Pro-\alg_k)^\op \arrow[d,"(-)^*"]\\
\Hopf{k} \arrow[r,"U_H"] & \Coalg_k,
\end{tikzcd}
\]
where $\Coalg_k$ denotes the category of cocommutative coalgebras over $k$ and $U_H$ is the forgetful functor. The left adjoint of $U_H$ is the free commutative Hopf algebra on a cocommutative coalgebra and is constructed in \cite{takeuchi:free-hopf}.
\end{proof}

Again, let us be more explicit. By Thm.~\ref{thm:splitting}, there is a natural splitting $\Fr(R) \cong \Fr^e(R) \times \Fr^c(R)$ into an \'etale and a connected part.

Let us describe the $\Fr^e(R)$ more concretely, for a finite-dimensional $k$-algebra $R$. The category of étale formal groups is equivalent to the category $\Mod_\Gamma$ of abelian groups with a discrete action of the absolute Galois group $\Gamma = \Gal(k)$ (cf. \cite[\textsection I.7]{fontaine:groupes-divisibles}, \cite[Thm.~1.6]{bauer-carlson:tensorproduct}); this equivalence is simply given by the functor $G \mapsto \colim_{k < k' < \bar k} G(k')$, where $k'$ runs through the finite extensions of $k$, and the inverse functor is given by $M \mapsto \Spf{\map^\Gamma(M,\bar k)}$. 

\begin{lemma}\label{lemma:freeetale}
Let $R$ be a finite-dimensional $k$-algebra. Then the $\Gamma$-module associated with $\Fr^e(R)$ is 
\[
\Z\langle\Hom_{\Alg_k}(R,\bar k)\rangle
\]
\end{lemma}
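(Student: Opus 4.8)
The plan is to prove the lemma by Yoneda's lemma in $\Mod_\Gamma$: I would compute the functor $N \mapsto \Hom_{\Mod_\Gamma}(M,N)$ corepresented by the $\Gamma$-module $M$ associated with $\Fr^e(R)$, and identify it with the one corepresented by $\Z\langle\Hom_{\Alg_k}(R,\bar k)\rangle$. Writing $\Fr^e = P\circ\Fr$, where $P\colon\FGps{k}\to\FGpse{k}$ is the projection onto the étale factor of the splitting of Thm.~\ref{thm:splitting}, I would first use that $P$ is left adjoint to the inclusion $I\colon\FGpse{k}\hookrightarrow\FGps{k}$ (in a product of additive categories the projection is both left and right adjoint to the inclusion). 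Combined with the free–forgetful adjunction $\Fr\dashv U$ of Lemma~\ref{lemma:freeformalgroup}, this gives, naturally in an étale formal group $H$,
\[
\Hom_{\FGpse{k}}(\Fr^e(R),H)\cong\Hom_{\FGps{k}}(\Fr(R),IH)\cong\Hom_{\Pro-\alg_k}(\Reg{H},R),
\]
where the last group consists of continuous homomorphisms. Under the equivalence $\FGpse{k}\simeq\Mod_\Gamma$, $H\leftrightarrow N$, with representing algebra $\Reg{H}=\map^\Gamma(N,\bar k)$ (the inverse equivalence recalled before the lemma), this reads $\Hom_{\Mod_\Gamma}(M,N)\cong\Hom_{\Pro-\alg_k}(\map^\Gamma(N,\bar k),R)$.

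The core computation is then to identify this right-hand side with $\map^\Gamma(\Hom_{\Alg_k}(R,\bar k),N)$, which equals $\Hom_{\Mod_\Gamma}(\Z\langle\Hom_{\Alg_k}(R,\bar k)\rangle,N)$, since a $\Gamma$-module map out of the free abelian group on a $\Gamma$-set is the same datum as a $\Gamma$-equivariant map of the generating set. To do this I would write $\map^\Gamma(N,\bar k)=\lim_{N_0}\map^\Gamma(N_0,\bar k)$ as an inverse limit over finite $\Gamma$-stable subsets $N_0\subseteq N$ of finite-dimensional étale $k$-algebras, so that a continuous homomorphism into the finite-dimensional algebra $R$ factors through some $\map^\Gamma(N_0,\bar k)$. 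By Galois descent for homomorphisms, $\Hom_{\Alg_k}(\map^\Gamma(N_0,\bar k),R)=\Hom_{\Alg_{\bar k}}(\map(N_0,\bar k),R\otimes_k\bar k)^\Gamma$, using $\map^\Gamma(N_0,\bar k)\otimes_k\bar k\cong\map(N_0,\bar k)=\bar k^{N_0}$. Since $\bar k^{N_0}$ is étale, an algebra map to $R\otimes_k\bar k$ factors through its reduction $\bar k^{X}$ with $X=\Hom_{\Alg_{\bar k}}(R\otimes_k\bar k,\bar k)=\Hom_{\Alg_k}(R,\bar k)$; on spectra a map $\Spec(R\otimes_k\bar k)\to N_0$ is a locally constant function, so $\Hom_{\Alg_{\bar k}}(\bar k^{N_0},R\otimes_k\bar k)=\map(X,N_0)$. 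Passing to the colimit over $N_0$ (legitimate since $X$ is finite) and then to $\Gamma$-invariants yields $\map(X,N)^\Gamma=\map^\Gamma(X,N)$, and Yoneda gives $M\cong\Z\langle\Hom_{\Alg_k}(R,\bar k)\rangle$.

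I expect the main obstacle to be this last step: the bookkeeping of the pro-structure together with the Galois descent. One must check that the identification $\map^\Gamma(N_0,\bar k)\otimes_k\bar k\cong\map(N_0,\bar k)$ and the reduction-to-$\pi_0$ step are compatible with the $\Gamma$-actions, so that taking invariants produces exactly the equivariant maps $\map^\Gamma(X,N)$, and that every identification is natural in $N$ (needed for the concluding Yoneda argument). By contrast, the formal inputs—the product decomposition of $\Fr(R)$ and the two adjunctions—are immediate, so the entire content lies in transporting the free–formal-group universal property across $\FGpse{k}\simeq\Mod_\Gamma$ and carrying out the étale computation over $\bar k$. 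As a consistency check one may take $k=\bar k$: then $\Gamma$ is trivial, $X$ counts the local factors of $R$, and the formula reduces to $M=\Z\langle\Hom_{\Alg_k}(R,k)\rangle$, the free abelian group on the connected components of $\Spec R$, which matches the expectation that a connected formal group has no nonzero points over a field.
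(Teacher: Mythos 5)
Your argument is correct and is essentially the paper's: both identify the left adjoint of the forgetful functor $\FGpse{k}\to(\alg_k^e)^\op$ with the free abelian group functor $\Set_\Gamma\to\Mod_\Gamma$ under the equivalences given by taking $\bar k$-points, and your ``core computation'' (Galois descent plus idempotent lifting) is precisely the verification that these forgetful functors correspond, which the paper records as a $2$-commutative square without spelling it out. The only substantive difference is that your reduction step, passing from an arbitrary finite-dimensional $R$ to the $\Gamma$-set $\Hom_{\Alg_k}(R,\bar k)$, makes explicit a detail that the paper's diagram (which involves only \'etale algebras) leaves implicit.
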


I thank the anonymous referee for suggesting the following simplified proof: 
\begin{proof}
Consider the following $2$-commutative diagram of functors:
\[
\begin{tikzcd}
\FGpse{k} \arrow[r,"U"] \arrow[d] & (\alg_k^e)^\op \arrow[d]\\
\Mod_\Gamma \arrow[r,"U_\Gamma"] & \Set_\Gamma
\end{tikzcd},
\]
where $\alg_k^e$ denotes the category of finite-dimensiona, \'etale $k$-algebras, $\Set_\Gamma$ the category of discrete $\Gamma$-sets, and $U_\Gamma$ the forgetful functor. The vertical arrows are equivalences and given by taking $\bar k$-valued points. Since the left adjoint of $U_\Gamma$ is the free abelian group functor on a $\Gamma$-set $X$, the result follows.
\end{proof}

\section{The Dieudonné correspondence and proof of the main theorems}\label{sec:dieudonne}

Continue to let $k$ be a perfect field of characteristic $p$.

\begin{numbereddefn} \label{def:discretedieudonnemodule}
Denote by $\DMod{k}$ the category of Dieudonné modules over $k$. These are $\mathcal R$-modules, i.e. $W(k)$-modules with homomorphisms
\[
V\colon M \to M \quad \text{and} \quad F\colon M \to M
\]
such that $FV=VF=p$, $Fa = \frob(a)F$ and $aV = V\frob(a)$ for $a \in W(k)$.
\end{numbereddefn}

Denote by $\DMod{k}^p$ the full subcategory of \emph{$p$-adic} Dieudonné modules. These are modules $M$ such that for all $x \in M$, the $W(k)$-submodule spanned by $V^i(x)$, for all $i$, is of finite length.

Dually, let $\DDModF{k}$ be the full subcategory of \emph{$F$-profinite} Dieudonné modules, i.~e. modules $M$ that are profinite as $W(k)$-modules and have a fundamental system of neighborhoods consisting of $W(k)$-modules closed under $F$.

The following theorem follows from classical Dieudonné theory (cf. \cite[Section 6]{bauer-carlson:tensorproduct}, \cite{demazure-gabriel:groupes-algebriques-1}):

\begin{thm}\label{thm:dieudonne}
There are equivalences of abelian categories
\[
D \colon (\AbSch{k}^p)^\op \to \DMod{k}^p
\]
and
\[
\DieuFor\colon (\FGps{k}^p)^\op \to \DDModF{k}
\]
These functors are given by
\[
D(G) = \colim_n \Hom_{\AbSch{k}}(G,W_n) \oplus \Bigl(\Gr(\Reg{G} \otimes_k \bar k) \otimes W(\bar k)\Bigr)^\Gamma
\]
and
\[
\DieuFor(G) = \Hom_{\FGps{k}}(G,CW).
\]
Here, $\Gr(H)$ stands for the group of group-like elements of a Hopf algebra $H$.
\end{thm}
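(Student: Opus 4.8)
The plan is to assemble both equivalences from the classical building blocks by running the product decompositions of Theorem~\ref{thm:splitting} in parallel with the corresponding decomposition of Dieudonné modules, and then to pass between the affine and formal statements by Cartier duality; I would establish the affine equivalence $D$ first. Splitting $\AbSch{k}^p \simeq \AbSchm{k}\times\AbSchu{k}$ (the unipotent factor is automatically $p$-adic by Remark~\ref{rem:unipotentpadic}, and $p$-adicity cuts the multiplicative factor down to groups whose character group is $p$-torsion), I would record the matching decomposition of a $p$-adic Dieudonné module as $M \cong M^{\mathrm m}\oplus M^{\mathrm u}$, where $F$ acts bijectively on $M^{\mathrm m}$ and $V$ acts topologically nilpotently on $M^{\mathrm u}$. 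It then suffices to produce an equivalence on each factor and to check that the stated formula for $D$ respects the splitting, the first summand yielding $M^{\mathrm u}$ and the second $M^{\mathrm m}$.

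For the unipotent factor I would invoke the classical theory of Demazure--Gabriel and Fontaine: the truncated Witt schemes $W_n$, with Verschiebung $V\colon W_n\to W_{n+1}$ as transition maps, pro-represent the unipotent theory, and $G \mapsto \colim_n\Hom_{\AbSch{k}}(G,W_n) = \Hom_{\AbSch{k}}(G,CW^u)$ is an exact anti-equivalence onto the $V$-topologically-nilpotent $p$-adic Dieudonné modules, the operations $F$ and $V$ being induced by the Frobenius and Verschiebung of $CW^u$ recorded in Section~\ref{sec:witt}. For the multiplicative factor I would use that a $p$-adic multiplicative-type group $G$ satisfies $\Reg{G}\otimes_k\bar k \cong \bar k[M]$ with $M = \Gr(\Reg{G}\otimes_k\bar k)$ a discrete $p$-torsion $\Gamma$-module (its character group), so that under Cartier duality $G$ corresponds to a $p$-torsion étale formal group and hence to a $\Gamma$-module; one then identifies its Dieudonné module with $(M\otimes W(\bar k))^\Gamma$, on which $F$ acts as the $p$-th power on $M$ tensored with $\frob$ (hence bijectively) and $V = pF^{-1}$. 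This is exactly the second summand.

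For the formal statement I would use that Cartier duality restricts to an anti-equivalence $\FGps{k}^p\simeq\AbSch{k}^p$ and is matched on the module side by the duality interchanging $\DMod{k}^p$ and $\DDModF{k}$ (swapping the roles of $F$ and $V$, and the $V$-nilpotent/$F$-bijective conditions with the $F$-profinite/$V$-bijective ones). Under this duality the Witt functor $\colim_n\Hom(-,W_n)$ is carried to $\Hom_{\FGps{k}}(-,CW)$, with the co-Witt formal group $CW$ as dualizing object in the sense of Fontaine; concretely $CW$ is an injective cogenerator of the connected part, while the étale part is again handled by Galois descent as in Lemma~\ref{lemma:freeetale}.

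The step I expect to be the main obstacle is the multiplicative/étale factor together with the bookkeeping of the module-side splitting: one must check that the group-like formula, after tensoring with $W(\bar k)$ and taking $\Gamma$-invariants, genuinely lands in $\DMod{k}^p$ with $F$ bijective and $V$ forced to equal $pF^{-1}$, and that the direct-sum decomposition of $D$ matches the product decomposition of $\AbSch{k}^p$ on the nose. Once this is in place, the unipotent part and the formal transfer are essentially citations of Fontaine and Demazure--Gabriel combined with the explicit $\mathcal R$-module structures already established in Section~\ref{sec:witt}.
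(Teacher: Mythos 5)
Your overall architecture is the same as the paper's: split off the unipotent/connected part and cite Demazure--Gabriel and Fontaine for it, handle the multiplicative summand by passing through Cartier duality to an \'etale formal group, and match the two decompositions. (The paper in fact runs the duality in the opposite direction from you: it cites the formal equivalence $\DieuFor$ wholesale and then \emph{defines} the multiplicative summand of $D$ as $I(\DieuFor(G^*))$, where $I=\Hom_{W(k)}(-,CW(k))$ is Matlis duality, identifying this with $\bigl(\Gr(\Reg{G}\otimes_k\bar k)\otimes W(\bar k)\bigr)^\Gamma$ by Galois descent; you instead propose to build the affine side first and transfer to the formal side. That difference is harmless.)

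There is, however, a concrete error in the step you yourself single out as the crux. On the multiplicative summand $(M\otimes W(\bar k))^\Gamma$, with $M=\Gr(\Reg G\otimes_k\bar k)$ a $p$-torsion $\Gamma$-module, it is $V$ that is bijective, not $F$: the operators are $F=[p]_M\otimes\frob$ and $V=\mathrm{id}_M\otimes\frob^{-1}$, so that $FV=VF=p$ and $V$ is invertible while $F$ is the multiplication-by-$p$ map on a $p$-torsion group, hence never injective for $M\neq 0$. For example $G=\mu_p$ gives $D(G)=(\Z/p\otimes W(\bar k))^\Gamma\cong k$ with $F=0$ and $V=\frob^{-1}$, consistent with the fact that the contravariant functor sends the vanishing Frobenius morphism of the connected group $\mu_p$ to the operator $F$. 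Your proposed verification that the formula lands in $\DMod{k}^p$ ``with $F$ bijective and $V=pF^{-1}$'' would therefore fail, and the formula $V=pF^{-1}$ does not even parse since $F$ is not invertible. The correct bookkeeping falls out of the duality the paper uses: $G^*$ is an \'etale formal group, so $F$ is bijective on $\DieuFor(G^*)$, and Matlis duality interchanges the roles of $F$ and $V$, producing $V$ bijective (and $F=pV^{-1}$) on $D(G^{\mathrm m})$. With that swap made throughout (in particular in your proposed decomposition $M\cong M^{\mathrm m}\oplus M^{\mathrm u}$ of $p$-adic Dieudonn\'e modules), your argument goes through and coincides with the paper's.
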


The formal part of this theorem is \cite[Thm.~6.5]{bauer-carlson:tensorproduct}, and the affine part is \cite[Thm.~6.3]{bauer-carlson:tensorproduct}. In the latter, if $G \cong G^u \times G^m$ is the splitting into a unipotent part and a part of multiplicative type (Thm.~\ref{thm:splitting}), the first summand of $D(G)$ is $D(G^u)$ and the second is $D(G^m)$. The statement in \cite[Thm.~6.3]{bauer-carlson:tensorproduct} actually contains an error for $D(G^m)$, and we take the opportunity to rectify it here.

\begin{proof}[Correction of {\cite[Thm.~6.3]{bauer-carlson:tensorproduct}}]
The proof of $D(G^u) = \colim_n \Hom_{\AbSch{k}}(G,W_n)$ is contained in \cite[Chapter III, 6]{demazure:pdivisiblegroups}. Thus let $G$ be of multiplicative type, and let
\[
D'(G) = I(\DieuFor(G^*)),
\]
where $G^*$ denotes the formal group Cartier dual to $G$, and $I=\Hom_{W(k)}(-,CW(k))$ denotes Matlis (or Pontryagin) duality between $W(k)$-modules and pro-(finite length) $W(k)$-modules. Since both dualities are anti-equivalences of categories and $\DieuFor$ is an equivalence, so is $D'$. It remains to show that $D'(G) = D(G)$.

For this, recall (e.g. from \cite[\textsection I.7]{fontaine:groupes-divisibles}) that the category of affine groups of multiplicative type over $k$ is equivalent with the category $\Mod_\Gamma$ of abelian groups with a discrete action of the absolute Galois group $\Gamma$ via the functor $G \mapsto \Gr(\Reg{G} \otimes_k \bar k)$.

Write $\overline{CW} = \colim_{k < k' < \bar k} CW(k')$ and $\overline W = \colim_{k < k' < \bar k} W(k')$, where $k'$ runs through all finite field extensions of $k$ contained in some algebraic closure $\bar k$. Then in terms of these $\Gamma$-modules, the functors $D$ and $D'$ are given, respectively, by
\[
D(M) = (M \otimes \overline W)^\Gamma
\]
and
\[
D'(M) = I(\Hom^{\Gamma}(M,\overline{CW})) = I(\Hom^{\Gamma}_{\overline W}(M \otimes \overline W,\overline{CW}))
\]
By Galois descent (cf. \cite[\textsection III.2]{fontaine:groupes-divisibles}), the category of $\overline W$-modules with semilinear $\Gamma$-actions is equivalent to the category of $W(k)$-modules by taking $\Gamma$-fixed points, and hence the last expression equals
\[
I(\Hom_{W(k)}((M \otimes \overline W)^\Gamma, CW(k)) = (M \otimes \overline W)^\Gamma.\qedhere
\]
\end{proof}

\begin{proof}[Proof of Thm.~\ref{thm:dieudonneoffreeformal}]
For a formal scheme $S$ represented by a profinite algebra $A$ (in particular, for algebras of finite dimension over $k$), we have that
\[
\DieuFor(\Fr(A)) = \Hom_{\FGps{k}}(\Fr(A),CW) = \Hom_{\FSch{k}}(\Spf A,CW) = CW(A).
\]
Here $\FSch{k}$, the category of formal schemes over $k$, is dual to the category of profinite $k$-algebras.
\end{proof}

\begin{proof}[Proof of Thm.~\ref{thm:dieudonneoffreeaffine}]
Let $S$ be an affine scheme $S$ represented by an algebra $A$. Then
\begin{align*}
D(\Fr(A)) =& \colim\Hom_{\AbSch{k}}(\Fr(A),W_n) \oplus \Bigl(\mu_{p^\infty}(A \otimes_k \bar k) \otimes W(\bar k)\Bigr)^\Gamma\\
=& \colim W_n(A) \oplus \Bigl(\mu_{p^\infty}(A \otimes_k \bar k) \otimes W(\bar k)\Bigr)^\Gamma. \qedhere
\end{align*}
\end{proof}

In order to prove Thm.~\ref{thm:freeaffinegroupfactorization}, we need so see that the multiplicative factor in the previous statement is well-defined for $p$-polar algebras. This will follow from the following result about $p$-typical formal group laws.

Recall (e.g. from \cite[A2.1.17]{ravenel:green}) that a $1$-dimensional formal group law $F$ over a torsion free $\Z_p$-algebra $R$ is \emph{$p$-typical} iff its logarithm $\log_F(x) \in (R \otimes \Q)\pow{x}$ is of the form $\log_F(x) = \sum_{i \geq 0} l_i x^{p^i}$ (with $l_0=1$). 

\begin{lemma}\label{lemma:ptypicalfactors}
Assume that $G \in \FGps{k}$ is the mod-$p$ reduction of a $1$-dimensional formal group over a torsion free $\Z_p$-algebra $R$. Then Theorem~\ref{thm:formalschemefactorization} holds for $G$.
\end{lemma}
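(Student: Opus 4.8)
The plan is to reduce to a $p$-typical formal group law and then to show that, over $k$, such a law involves only monomials of total degree $\equiv 1 \pmod{p-1}$ -- precisely the monomials that can be formed using the $p$-polar operation $\mu$. First I would set up the functor of points. Since $G$ is the reduction of a $1$-dimensional formal group it is connected, hence represented by $k\pow x$, and for a finite-dimensional $k$-algebra $A$ its points are $G(A)=\nil(A)$, the nilpotent elements of $A$, with group law $a +_G b = \bar F(a,b)$, where $\bar F \in k\pow{x,y}$ is the image of the formal group law $F$ over $R$ under a structure map $R \to k$ (which factors through $R/p$ since $k$ has characteristic $p$). By Cartier's theorem that every formal group law over a $\Z_{(p)}$-algebra is strictly isomorphic to a $p$-typical one (\cite[\textsection A2.1]{ravenel:green}), $F$ is strictly isomorphic over $R$ to a $p$-typical law $F'$; reducing the isomorphism modulo $p$ gives a natural isomorphism between $G$ and the functor of points $G'$ of $\bar F'$. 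As factoring through $\pol$ is preserved under natural isomorphism of the functor of points, I may replace $F$ by $F'$ and assume from now on that $F$ is $p$-typical, with $\log_F(x) = \sum_{i \ge 0} l_i x^{p^i}$.

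The heart of the argument is the claim that every monomial occurring in $\bar F(x,y)$ has total degree $\equiv 1 \pmod{p-1}$. To prove it, let $\lambda \in \Z_p$ be a Teichm\"uller representative, i.e. $\lambda^{p-1}=1$, so that $\lambda^{p^i}=\lambda$ for every $i \ge 0$. Because $F$ is $p$-typical, $\log_F(\lambda x) = \sum_i l_i \lambda^{p^i} x^{p^i} = \lambda \log_F(x)$, and since the multiplication-by-$\lambda$ endomorphism is characterized by $\log_F([\lambda]_F(x)) = \lambda \log_F(x)$, applying $\log_F^{-1}$ gives $[\lambda]_F(x)=\lambda x$ exactly. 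As $[\lambda]_F$ is an endomorphism of $F$, this yields $F(\lambda x,\lambda y)=\lambda F(x,y)$ over $R$; reducing modulo $p$ and writing $\bar\lambda \in \F_p^\times$ for the image of $\lambda$ gives $\bar F(\bar\lambda x,\bar\lambda y)=\bar\lambda\,\bar F(x,y)$. Comparing the coefficient of $x^i y^j$, any nonzero coefficient forces $\bar\lambda^{\,i+j-1}=1$ for all $\bar\lambda \in \F_p^\times$; as $\F_p^\times$ is cyclic of order $p-1$, this means $i+j \equiv 1 \pmod{p-1}$, proving the claim.

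With the claim in hand the factorization follows. The underlying set $\nil(A)=\{a \in A \mid a^{p^N}=0 \text{ for some } N\}$ depends only on $\pol(A)$, since each $a^{p^N}$ is an iterated $\mu$-product (as $p^N \equiv 1 \pmod{p-1}$). By the claim and Proposition~\ref{prop:onlyonemultiplication}, every monomial $a^i b^j$ appearing in $\bar F(a,b)$ is a well-defined $p$-polar product, and for nilpotent $a,b$ the series $\bar F(a,b)$ is a finite sum of such products, with linear part $a+b$. I would therefore define $\tilde G\colon \pol_p(k) \to \Ab$ on a finite-length $p$-polar algebra $P$ by $\tilde G(P)=(\{a \in P \mid a^{p^N}=0\},\ \bar F)$, so that $\tilde G \circ \pol = G$. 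The abelian-group axioms for $\tilde G(P)$ are power-series identities all of whose terms lie in the $p$-polar part (the substitution $\bar F(\bar F(x,y),z)$ again only produces degrees $\equiv 1 \pmod{p-1}$, and the inverse $[-1]_F$ is $p$-polar by the same scaling argument); checking them degreewise reduces by naturality to the free $p$-polar algebra, which embeds into a polynomial ring (Example~\ref{ex:free-p-polar}) where $\bar F$ is a genuine formal group law.

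The main obstacle is the degree claim, and the idea that unlocks it is that for a $p$-typical law the Teichm\"uller roots of unity act by honest scalar multiplication, $[\lambda]_F(x)=\lambda x$. This is exactly where $p$-typicality -- hence Cartier's theorem -- is indispensable, since a non-$p$-typical law such as the ordinary multiplicative group has an $xy$-term that is not a $p$-polar monomial when $p>2$. The remaining points, that $\nil$ is detected $p$-polarly and that the group axioms descend to arbitrary $p$-polar algebras, are routine.
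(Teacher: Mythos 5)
Your proof is correct, and it follows the same overall strategy as the paper: invoke Cartier's theorem to reduce to a $p$-typical law, show that the group law only involves monomials of total degree $\equiv 1 \pmod{p-1}$, and then define $\tilde G(P)=\nil(P)$ with the same power series formula on a $p$-polar algebra $P$. Where you differ is in the justification of the central degree claim. The paper works with the logarithm and exponential over $R\otimes\Q$: from $\log_F(x)=\sum_i l_i x^{p^i}$ it deduces (asserting it is ``straightforward'') that $\exp_F(x)=\sum_i a_i x^{1+i(p-1)}$, so that both series, and hence $F=\exp_F(\log_F(x)+\log_F(y))$, live in the $p$-polar part of the power series ring. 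You instead prove the degree constraint on $F(x,y)$ itself by an eigenvalue argument: for a Teichm\"uller representative $\lambda\in\Z_p$ one has $\log_F(\lambda x)=\lambda\log_F(x)$, hence $[\lambda]_F(x)=\lambda x$ and $F(\lambda x,\lambda y)=\lambda F(x,y)$, which forces every nonzero coefficient to sit in total degree $\equiv 1\pmod{p-1}$. This is a genuinely different (and arguably more conceptual) route to the same claim; it also hands you $[-1]_F(x)=-x$ for free, and it sidesteps having to compute with the compositional inverse over $R\otimes\Q$. A further point in your favour is that you address details the paper leaves implicit: that $\nil(P)$ and each monomial $a^ib^j$ are well defined in a $p$-polar algebra via Proposition~\ref{prop:onlyonemultiplication}, that the sums are finite because $a^{p^N}$ occurs as a subproduct of $a^ib^j$ once $i\geq p^N$, and that the group axioms can be checked coefficientwise since every monomial in $\bar F(\bar F(x,y),z)$ is again a legitimate $p$-polar product. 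Both arguments use $p$-typicality in an essential and equivalent way, so neither is more general than the other.
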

\begin{proof}
Recall (e.g. from \cite[A2.1.17]{ravenel:green}) that a $1$-dimensional formal group law $F$ over a torsion free $\Z_p$-algebra $R$ is \emph{$p$-typical} iff its logarithm $\log_F(x) \in (R \otimes \Q)\pow{x}$ is of the form $\log_F(x) = \sum_{i \geq 0} l_i x^{p^i}$ (with $l_0=1$). By Cartier's Theorem \cite{cartier:formal-groups}, cf. \cite[Thm.~A2.1.18]{ravenel:green}, every one-dimensional formal group law is isomorphic to a $p$-typical one, so we may assume that $G$ is the mod-$p$ reduction of a $p$-typical formal group law $F$ over $R$. It is straightforward to see that the (compositionally) inverse power series $\exp_F(x)$ has the form
\[
\exp_F(x) = \sum_{i \geq 0} a_i x^{1+i(p-1)} \quad \text{for some } a_i \in R \otimes \Q,\; a_0=1.
\]
Thus both $\log_F$ and $\exp_F$ are in fact elements of the $p$-polar power series algebra
\[
\prod_{i \geq 0} (R\otimes \Q)\langle x^{1+i(p-1)} \rangle \subset (R \otimes \Q)\pow{x}.
\]
Hence if $A \in \pol_p(k)$, we can define
\[
\tilde G(A) = \nil(A)
\]
with the group structure
\[
x * y = \exp_F(\log_F(x) + \log_F(y)),
\]
agreeing with $G(R)$ if $A=\pol_p(R)$.
\end{proof}

\begin{corollary}\label{cor:punitsofpolar}
The functor $\mu_{p^\infty}\colon \Alg_k \to \Ab$ factors through $\Pol_p(k)$.
\end{corollary}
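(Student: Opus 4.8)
The plan is to reduce the computation of $\mu_{p^\infty}$ to the formal multiplicative group and then invoke Lemma~\ref{lemma:ptypicalfactors}. First I would record the elementary observation that, for any $k$-algebra $R$ with $\mathrm{char}\,k=p$, a unit $u\in R^\times$ is $p$-power torsion if and only if $u-1$ is nilpotent: since $R$ is commutative of characteristic $p$, the Frobenius gives $(u-1)^{p^n}=u^{p^n}-1$, so $u^{p^n}=1$ iff $(u-1)^{p^n}=0$. Hence $\mu_{p^\infty}(R)=1+\nil(R)$ as a subgroup of $R^\times$. Next I would note that the underlying \emph{set} $\nil(R)$ already factors through $\Pol_p(k)$: an element $x$ is nilpotent iff $x^{p^m}=0$ for some $m$, and since $p^m\equiv 1\pmod{p-1}$ the monomial $x^{p^m}$ is a legitimate $p$-polar expression by Proposition~\ref{prop:onlyonemultiplication}. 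Thus $R\mapsto\nil(R)$ depends only on $\pol(R)$.

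The group structure is the substantive point. The multiplicative formal group law $F_m(x,y)=x+y+xy$, viewed over $\Z_p$, is torsion-free and reduces mod $p$ to the formal group $\hat{\mathbb{G}}_m$, whose functor of points on a finite-dimensional $R$ is exactly $(\nil(R),F_m)=\mu_{p^\infty}(R)$. I would therefore apply Lemma~\ref{lemma:ptypicalfactors} to $G=\hat{\mathbb{G}}_m$: after $p$-typicalization the group law becomes a power series $F$ all of whose monomials have total degree $\equiv 1\pmod{p-1}$, so the prescription $\tilde G(A)=\nil(A)$ with $x*y=\exp_F(\log_F x+\log_F y)$ makes sense and is $p$-polar. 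The only gap between the lemma and the corollary is that the lemma is stated on $\alg_k$, whereas here the domain is all of $\Alg_k$; but the defining series terminate on nilpotent arguments regardless of dimension, so the very same formula defines a functor $\tilde M\colon\Pol_p(k)\to\Ab$ extending $\tilde G$.

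Finally I would identify $\tilde M\circ\pol$ with $\mu_{p^\infty}$ on all of $\Alg_k$. The $p$-typicalization is an isomorphism $\theta\colon F\xrightarrow{\sim}F_m$ given by a fixed power series with coefficients in $\Z_p$; reducing mod $p$ and evaluating on $\nil(R)$ yields a bijection $\theta_R\colon(\nil(R),F)\to(\nil(R),F_m)=\mu_{p^\infty}(R)$ which is a group isomorphism (by $\theta(F(x,y))=F_m(\theta x,\theta y)$) and is natural in $R$, being applied entrywise. Hence $\mu_{p^\infty}\cong\tilde M\circ\pol$, i.e. $\mu_{p^\infty}$ factors through $\Pol_p(k)$ up to natural isomorphism, as required.

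I anticipate the main obstacle to be the group law rather than the underlying set: for $p>2$ the naive law $F_m$ contains the degree-$2$ term $xy$, which is \emph{not} a $p$-polar monomial, so one genuinely must pass to a $p$-typical model to obtain a $p$-polar group law, and then transport the structure back to the honest multiplicative group via the coordinate-change natural isomorphism $\theta$. A secondary point requiring care is the passage from finite-dimensional algebras, where Lemma~\ref{lemma:ptypicalfactors} lives, to arbitrary $k$-algebras, which is handled by the finiteness of the relevant series on nilpotent elements.
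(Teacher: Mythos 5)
Your proof is correct and follows essentially the same route as the paper: identify $\mu_{p^\infty}(R)$ with the multiplicative formal group evaluated on $\nil(R)$, pass to a $p$-typical model so that the group law becomes a $p$-polar power series, and invoke Lemma~\ref{lemma:ptypicalfactors}. The only minor divergence is the extension from finite-dimensional to arbitrary algebras: the paper takes a colimit over finite-dimensional $p$-polar subalgebras of $A$, whereas you evaluate the terminating series directly on $\nil(A)$ --- which works, but tacitly uses the small extra check that in a $p$-polar algebra $x^{p^N}=0$ forces $x^iy^j=0$ for $i\geq p^N$ (via a multiplication scheme containing $x^{p^N}$ as a subtree), so that the series really does terminate and $\nil(A)$ is closed under the group law.
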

\begin{proof}
The functor
\[
\mu_{p^\infty}(R) = \{ x \in R^\times \mid x^{p^n}=1 \text{ for some } n \geq 0\}
\]
is the colimit of the functors $\mu_{p^n}(R)$ represented by $k[y]/(y^{p^n}-1) \cong k[x]/(x^{p^n})$ where $x=y-1$. Thus $\mu_{p^\infty(R)} \cong \nil(R)$ as sets, and the group structure on $\nil(R)$ is the multiplicative one: $x * y = x+y+xy$. Thus the restriction of $\mu_{p^\infty}$ to $\alg_k$ is isomorphic to the multiplicative formal group $\hat {\mathbb G}_m$. Since it is defined over $R=\Z_p$ and one-dimensional, Lemma~\ref{lemma:ptypicalfactors} applies and shows that $\mu_{p^\infty}$ factors through \emph{finite} $p$-polar $k$-algebras.

Now, for an arbitrary $p$-polar $k$-algebra $A$, define
\[
\tilde{\mu}_{p^\infty} = \colim_{B<A} \mu_{p^\infty}(B), 
\]
where $B$ ranges over the finite-dimensional $p$-polar subalgebras of $A$. Since $\nil(R) = \colim_{B<A} \nil(B)$ because finitely generated nilpotent subalgebras of $R$ are finite-dimensional, this is a factorization as required.
\end{proof}

\begin{proof}[Proof of Thm.~\ref{thm:freeaffinegroupfactorization}]
By Thm.~\ref{thm:Wittofpolar}, $CW^u\colon \Alg_k \to \DMod{k}^p$ factors through $\Pol_p(k)$, and by Cor.~\ref{cor:punitsofpolar}, so does $\mu_{p^\infty}\colon \Alg_k \to \Ab$. It follows that also the functor
\[
R \mapsto \left(\mu_{p^\infty}(R \otimes_k \bar k) \otimes W(\bar k)\right)^{\Gal(k)}\colon \Alg_k \to \DMod{k}^p
\]
appearing on the right hand side in Thm.~\ref{thm:dieudonneoffreeaffine}, factors through $\Pol_p(k)$. Thus, by the said theorem, $R \mapsto D(\Fr(R))$ factors through $\Pol_p(k)$. Since $D$ is an anti-equivalence between $\AbSch{k}^p$ and $\DMod{k}^p$, the result follows.
\end{proof}

To prove Thm.~\ref{thm:freeformalgroupfactorization}, we need to study the \'etale part $\Fr^e$ of the free formal group functor more closely.

\begin{lemma}\label{lemma:reducedstructure}
Let $k$ be algebraically closed and let $A$ be a finite, \emph{reduced} $p$-polar $k$-algebra, i.e. one such that for $x \neq 0$, we have that $x^{p^N}\neq 0$ for all $N\geq0$. Then $A$ is isomorphic to a finite product of polarizations of $k$.
\end{lemma}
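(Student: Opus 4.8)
The plan is to study the $p$-th power operation $\phi(x)=x^p=\mu(x,\dots,x)$ as a Frobenius-semilinear endomorphism of $A$ and then to descend to an $\F_p$-form. First I would observe that, since $\operatorname{char} k = p$, the binomial coefficients $\binom{p}{i}$ with $0<i<p$ vanish in $k$, so the $k$-multilinearity of $\mu$ gives $\phi(x+y)=\phi(x)+\phi(y)$, while clearly $\phi(\lambda x)=\lambda^p\phi(x)$ for $\lambda\in k$. Moreover $\phi$ is a $p$-polar endomorphism: by Prop.~\ref{prop:onlyonemultiplication}, both $\mu(x_1,\dots,x_p)^p$ and $\mu(x_1^p,\dots,x_p^p)$ are the unique product of the multiset in which each $x_i$ occurs $p$ times, so $\phi(\mu(x_1,\dots,x_p))=\mu(\phi(x_1),\dots,\phi(x_p))$. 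The reducedness hypothesis says precisely that $\phi^N(x)=x^{p^N}\ne0$ whenever $x\ne0$, i.e.\ that $\phi$ is injective; viewing the Frobenius-semilinear $\phi$ as a $k$-linear map out of the Frobenius twist $A^{(p)}$ (of the same finite dimension, as $k$ is perfect) shows that injectivity forces $\phi$ to be bijective.

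Next I would apply Frobenius (étale) descent: since $k$ is algebraically closed and $\phi$ is a bijective Frobenius-semilinear operator on the finite-dimensional space $A$, its fixed points $B=A^\phi=\{x:x^p=x\}$ form an $\F_p$-subspace with $\dim_{\F_p}B=\dim_k A$, and the canonical map $B\otimes_{\F_p}k\to A$ is an isomorphism. Because $\phi$ is a $p$-polar endomorphism, $B$ is closed under $\mu$, hence is a finite $p$-polar $\F_p$-algebra with $x^p=x$ for all $x\in B$, and $B\otimes_{\F_p}k\cong A$ is an isomorphism of $p$-polar $k$-algebras. Since $\pol(\F_p)\otimes_{\F_p}k=\pol(k)$ and base change commutes with finite products, it would then suffice to prove $B\cong\prod\pol(\F_p)$.

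I would prove this last statement by induction on $\dim_{\F_p}B$, using the multiplication operators $T_{y_2,\dots,y_p}(x)=x y_2\cdots y_p$, which are $\F_p$-linear and, by axiom (ASSOC), commute with one another. The crux is that for $e\in B$ the operator $L_e=T_{e,\dots,e}$, $L_e(x)=x e^{p-1}$, is \emph{idempotent}: by Prop.~\ref{prop:onlyonemultiplication}, $L_e^2(x)=x e^{2(p-1)}$ is the unambiguous product of $x$ with $2(p-1)$ copies of $e$, and collapsing a subproduct of $p$ of these factors via $e^p=e$ rewrites it as $x e^{p-1}=L_e(x)$. Since $L_e$ commutes with every $T_{\vec y}$, both $\operatorname{im}L_e$ and $\ker L_e$ are $p$-polar ideals (Def.~\ref{defn:ideal}); splitting each argument as $x=L_e x+(x-L_e x)$ and expanding $\mu$ shows that all mixed terms lie in $\operatorname{im}L_e\cap\ker L_e=0$, so $B\cong\operatorname{im}L_e\times\ker L_e$ as $p$-polar algebras. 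As $L_e(e)=e^p=e$, we have $e\in\operatorname{im}L_e$, whence $\operatorname{im}L_e\ne0$ for $e\ne0$. If some $e\ne0$ satisfies $L_e\ne\mathrm{id}$, then $\ker L_e\ne0$ and both factors have strictly smaller dimension, so the induction closes.

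The remaining case is that $L_e=\mathrm{id}$ for every $e\ne0$, i.e.\ $x e^{p-1}=x$ for all $x$. Then any $e\ne0$ is a unit in the sense of Remark~\ref{remark:unity}, which endows $B$ with a commutative, associative, unital $\F_p$-algebra structure $x\cdot y=\mu(e,\dots,e,x,y)$ whose $p$-fold product recovers $\mu$ and in which every element satisfies $x^{\cdot p}=x^p=x$. A finite commutative ring in which every element satisfies $x^p=x$ is reduced, hence a product of finite fields, each of which, having all its elements fixed by Frobenius, must equal $\F_p$; therefore $B\cong\prod\pol(\F_p)$, and assembling gives $A\cong B\otimes_{\F_p}k\cong\prod\pol(k)$. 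I expect the genuine obstacle to be exactly the compatibility of the decomposition with the $p$-ary operation $\mu$: this is what the idempotency $L_e^2=L_e$ (a direct consequence of $e^p=e$) together with the ideal property coming from commutativity of the $T_{\vec y}$ is designed to resolve.
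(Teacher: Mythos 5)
Your proof is correct. Its second half --- splitting $B$ as the product of the image and kernel of the idempotent operator $L_e(x)=xe^{p-1}$ for an element $e\neq 0$ with $e^p=e$, inducting on dimension, and resolving the terminal case via the unity observation of Remark~\ref{remark:unity} together with the classification of finite reduced commutative algebras --- is essentially the paper's argument, including the use of Proposition~\ref{prop:onlyonemultiplication} and (ASSOC) to see that $L_e$ commutes with the multiplication operators and hence that $\ker L_e$ and $\im L_e$ are ideals with vanishing cross-terms. Where you genuinely diverge is in producing the element $e$. The paper constructs it by hand inside $A$: it picks $y$ with $y,y^p,\dots,y^{p^{j-1}}$ linearly independent and $y^{p^j}$ a nontrivial combination of these (nontrivial by reducedness), extracts a nonzero root $\beta$ of an additive polynomial (this is where algebraic closedness enters), and writes down an explicit Frobenius-fixed vector in the span of the $y^{p^l}$. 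You instead observe that $\phi(x)=x^p$ is an additive, $p$-semilinear, injective (hence bijective) operator and invoke the classical descent theorem for such operators over an algebraically closed field to obtain an $\F_p$-form $B=A^\phi$ in one stroke; then \emph{every} nonzero element of $B$ satisfies $e^p=e$ and the whole induction runs over $\F_p$. Your route is more conceptual and in fact explains what the paper's computation is secretly doing (solving for a $\phi$-fixed vector in a $\phi$-stable subspace), at the cost of importing a standard but nontrivial external result; the paper's version is self-contained and never needs the full $\F_p$-form, only a single fixed vector.
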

\begin{proof}
Suppose $A$ has a unity in the sense of Remark~\ref{remark:unity}, i.e. an element $u\in A$ such that $u^{p-1}a=a$ for all $a \in A$. Then, by the quoted remark, $A = \pol(\tilde A)$, where $\tilde A$ is the algebra structure on $A$ given by $x\cdot y = \mu(u,\dots,u,x,y)$. Since $\tilde A$ is reduced and finite, it is a product of finitely many copies of $k$.

For the general case, I claim that $A$ has a nonzero element $e$ such that $e^p=e$. 

Choose a nonzero $y \in A$ and $j \geq 1$ such that the powers $y,y^p,\dots,y^{p^{j-1}}$ are linearly independent and $y^{p^j}=\sum_{i=0}^{j-1} \alpha_i y^{p^i}$ for some $\alpha_i \in k$. Such $y$ and $j$ must exist by the finiteness of $A$. Since $A$ is reduced, not all $\alpha_i$ are zero.
Let $\beta$ be a nonzero root of the polynomial
\[
p(x) = \sum_{i=0}^{j-1} \alpha_i^{p^{j-i-1}} x^{p^{j-i}} - x
\]
and let
\[
e = \sum_{l=0}^{j-1} \Bigl( \sum_{i=0}^l (\alpha_i\beta^p)^{p^{l-i}}\Bigr) y^{p^l}.
\] 
Then it is straightforward to verify that $e^p=e$. Since the $y^{p^l}$ are linearly independent, $e$ can only be zero if $\sum_{i=0}^l(\alpha_i\beta^p)^{p^{l-i}} = 0$ for all $l \leq j-1$. But this can only happen if all $\alpha_i=0$, contrary to the assumption.

Now consider the map $f\colon A \to A$ given by $y \mapsto e^{p-1}y$. Since $e^p=e$, this map is idempotent and a $p$-polar $k$-algebra endomorphism. Thus 
\[
A \cong \ker(f) \times \im(f)
\]
as $p$-polar algebras. Note that $\im(f)$ is a nontrivial $p$-polar algebra with unity $e$, so by the previous case, it is a product of copies of $k$. The $p$-polar algebra $\ker(f)$ has smaller dimension over $k$, so we are done by induction.
\end{proof}

\begin{lemma}\label{lemma:freeetaleextension}
The functor $\alg_k \to \Mod_\Gamma$ given by $A \mapsto \Z\langle \Hom_{\alg_k}(A,\bar k)\rangle$ factors through $\pol_p(k)$.
\end{lemma}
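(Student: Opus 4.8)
The plan is to show that the Galois set $\Hom_{\alg_k}(A,\bar k)$, together with its functoriality, is already determined by the $p$-polar algebra $\pol(A)$, and to package this into a functor on $\pol_p(k)$.

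First I would reduce to reduced algebras over $\bar k$. A $k$-algebra map $A\to\bar k$ is the same as a $\bar k$-algebra map $A\otimes_k\bar k\to\bar k$, and since $\bar k$ is reduced any such map factors through the reduction, so
\[
\Hom_{\alg_k}(A,\bar k) = \Hom_{\alg_{\bar k}}\bigl((A\otimes_k\bar k)_{\mathrm{red}},\bar k\bigr),
\]
with $\Gamma=\Gal(k)$ acting through $\bar k$. The point is that each operation on the right is visible on the $p$-polar level: polarization commutes with base change, $\pol(A)\otimes_k\bar k=\pol(A\otimes_k\bar k)$; and the nilradical is intrinsic, $\nil(\pol(B))=\nil(B)$, because $x^{p^N}$ computed with $\mu$ agrees with the ordinary power, and by Prop.~\ref{prop:onlyonemultiplication} together with multilinearity of $\mu$ any $p$-polar monomial containing a fixed element to a power $\geq p^N$ vanishes once $x^{p^N}=0$. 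This makes $\nil(P)=\{x\mid x^{p^N}=0\text{ for some }N\}$ a $p$-polar ideal (cf.\ Def.~\ref{defn:ideal}) for every $P$, shows that reduction $P\mapsto P_{\mathrm{red}}$ is a finiteness-preserving functor on $\pol_p(k)$, and gives $(\overline P)_{\mathrm{red}}=\pol\bigl((A\otimes_k\bar k)_{\mathrm{red}}\bigr)$ for $P=\pol(A)$, where $\overline P=P\otimes_k\bar k$.

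Next I would invoke Lemma~\ref{lemma:reducedstructure}: the finite reduced $p$-polar $\bar k$-algebra $(\overline P)_{\mathrm{red}}$ is isomorphic to $\pol(\bar k)^r$, and the $r$ ``points'' must be recovered intrinsically. The only subtlety is that $\pol(\bar k)$ carries the extra automorphisms $x\mapsto\zeta x$ with $\zeta\in\mu_{p-1}(\bar k)=\F_p^\times$ (these are exactly the $p$-polar automorphisms, since $f(x)=cx$ is $p$-polar iff $c^p=c$). I would therefore set
\[
X(P)=\bigl(\Hom_{\pol_p(\bar k)}\bigl((\overline P)_{\mathrm{red}},\pol(\bar k)\bigr)\setminus\{0\}\bigr)\big/\mu_{p-1}(\bar k),
\]
and define $\tilde F(P)=\Z\langle X(P)\rangle\in\Mod_\Gamma$, with $\Gamma$ acting through its semilinear action on $\overline P$ and on $\pol(\bar k)$; this action is discrete and preserves $X(P)$ because $\mu_{p-1}(\bar k)=\F_p^\times$ is $\Gamma$-fixed. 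A direct computation shows that a nonzero $p$-polar map $\pol(\bar k)^r\to\pol(\bar k)$ amounts to choosing one factor together with a scalar in $\F_p^\times$, so modulo $\mu_{p-1}(\bar k)$ these biject $\Gamma$-equivariantly with the $r$ factors, and for $P=\pol(A)$ with the projections, i.e.\ with $\Hom_{\alg_{\bar k}}((A\otimes_k\bar k)_{\mathrm{red}},\bar k)=\Hom_{\alg_k}(A,\bar k)$.

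Finally I would verify functoriality. A morphism $f\colon P\to Q$ in $\pol_p(k)$ induces $\overline f_{\mathrm{red}}\colon(\overline P)_{\mathrm{red}}\to(\overline Q)_{\mathrm{red}}$, and precomposition defines a partial map of sets $X(Q)\to X(P)$ (a nonzero map may become zero after precomposition), inducing the contravariant arrow $\tilde F(Q)\to\tilde F(P)$ that sends a basis point to its image, or to $0$ when the precomposite vanishes. This is functorial because precomposition is, and the ``send-to-zero'' convention composes consistently; it is $\Gamma$-equivariant because $\overline f_{\mathrm{red}}$ is defined over $k$. Restricting to $f=\pol(g)$ recovers the honest precomposition on $\Hom_{\alg_k}(-,\bar k)$, which is total, so $\tilde F\circ\pol$ is the functor in the statement. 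The main obstacle is exactly this last bookkeeping: the $\mu_{p-1}(\bar k)$-ambiguity and the fact that general $p$-polar maps need not preserve the point set (unlike algebra maps, which respect units and hence give honest maps of spectra) force the partial-map/quotient formulation, and care is needed to check that it is genuinely functorial and that it collapses to the expected permutation-module functoriality along $\pol$.
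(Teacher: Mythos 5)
Your proposal is correct and follows essentially the same route as the paper: pass to the reduced quotient of the base change to $\bar k$, invoke Lemma~\ref{lemma:reducedstructure} to identify it with a power of $\pol(\bar k)$, classify the $p$-polar homomorphisms to $\pol(\bar k)$ (nonzero ones are $\F_p^\times$-multiples of projections), and linearize. Your formulation via $\Hom$-sets modulo $\mu_{p-1}(\bar k)$ with partial maps is just a coordinate-free rephrasing of the paper's functor $\Phi$ (matrices with nonzero entries replaced by $1$), and your extra verification that the nilradical is an intrinsic $p$-polar ideal fills in a step the paper leaves implicit.
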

\begin{proof}
Let $\mathcal E$ be the full subcategory of $\pol_p(\bar k)$ of reduced $p$-polar algebras. There is a functor $\pol_p(\bar k) \to \mathcal E$ given by $A \mapsto A/\Nil(A)$. By Lemma~\ref{lemma:reducedstructure}, all objects of $\mathcal E$ are isomorphic to $\bar k^n$ for some $n \geq 0$.
Consider the set
\[
\Hom_{\pol_p(\bar k)}(\bar k^n,\bar k).
\]
One easily verifies that a linear map $\bar k^n \to \bar k$ represented by a row vector $(a_1,\dots,a_p)$ is a homomorphism of $p$-polar algebras iff it is zero or exactly one $a_i$ is nonzero, and furthermore $a_i \in \F_p$. We now construct a functor to abelian groups,
\[
\Phi\colon \mathcal E \to \Ab
\]
by defining $\Phi(\bar k^n) = \Z^n$ and for a morphism $\bar k^n \to \bar k^m$ represented by a matrix $M$, $\Phi(M)$ is the matrix $M$ with every nonzero entry replaced by $1$. Because of the special form of $M$, this is indeed a well-defined functor. Moreover, it carries the $\Gamma$-action on $A \otimes_k \bar k$ to an action by permutation matrices on $\Phi(A \otimes_k \bar k)$. We have that $\Phi(\pol_p(R)) \cong \Z\langle \Hom_{\Alg_{k}}(R,k)\rangle$ for reduced $R$ over algebraically closed $k$.

Now define
\[
\mathcal F\colon \pol_p(k) \to \Ab
\]
by $\mathcal F(A) = \Phi\Bigl((A \otimes_k \bar k)/\Nil(A \otimes_k \bar k)\Bigr)$. By the above, this is the desired extension.
\end{proof}

\begin{proof}[Proof of Thm.~\ref{thm:freeformalgroupfactorization}]
By Theorems~\ref{thm:Wittofpolar} and \ref{thm:dieudonneoffreeformal}, the functor $R \mapsto \DieuFor(\Fr(R))$ factors through $\pol_p(R)$. The Dieudonné functor $\DieuFor$ is not an equivalence on all formal groups (just on formal $p$-groups), but it is an equivalence between connected formal groups and their image. Thus it remains to show that the \'etale part $\Fr^e$ of $\Fr$ factors through $\pol_p(k)$. Using the equivalence between \'etale formal groups and $\Gamma$-modules, this case is covered by Lemma~\ref{lemma:freeetale} together with Lemma~\ref{lemma:freeetaleextension}.
\end{proof}

\bibliographystyle{alpha}
\bibliography{bibliography}

\begin{thebibliography}{Dem86}

\bibitem[BC19]{bauer-carlson:tensorproduct}
Tilman Bauer and Magnus Carlson.
\newblock Tensor products of affine and formal abelian groups.
\newblock {\em Documenta Mathematica}, 24:2525--2582, 2019.

\bibitem[Bor16]{borger:witt-vector-lectures}
James Borger.
\newblock Witt vectors, lambda-rings, and arithmetic jet spaces.
\newblock Lecture notes and exercises, available at
  \url{https://maths-people.anu.edu.au/~borger/classes/copenhagen-2016/}, 2016.

\bibitem[Car67]{cartier:formal-groups}
Pierre Cartier.
\newblock Modules associ\'{e}s \`a un groupe formel commutatif. {C}ourbes
  typiques.
\newblock {\em C. R. Acad. Sci. Paris S\'{e}r. A-B}, 265:A129--A132, 1967.

\bibitem[Dem86]{demazure:pdivisiblegroups}
Michel Demazure.
\newblock {\em Lectures on {$p$}-divisible groups}, volume 302 of {\em Lecture
  Notes in Mathematics}.
\newblock Springer-Verlag, Berlin, 1986.
\newblock Reprint of the 1972 original.

\bibitem[DG70]{demazure-gabriel:groupes-algebriques-1}
Michel Demazure and Pierre Gabriel.
\newblock {\em Groupes alg{\'e}briques. {T}ome {I}: {G}{\'e}om{\'e}trie
  alg{\'e}brique, g{\'e}n{\'e}ralit{\'e}s, groupes commutatifs}.
\newblock Masson \& Cie, {\'E}diteur, Paris; North-Holland Publishing Co.,
  Amsterdam, 1970.
\newblock Avec un appendice {{\i}t Corps de classes local} par Michiel
  Hazewinkel.

\bibitem[Fon77]{fontaine:groupes-divisibles}
Jean-Marc Fontaine.
\newblock {\em Groupes {$p$}-divisibles sur les corps locaux}.
\newblock Soci{\'e}t{\'e} Math{\'e}matique de France, Paris, 1977.
\newblock Ast{{\'e}}risque, No. 47-48.

\bibitem[Haz09]{hazewinkel:witt}
Michiel Hazewinkel.
\newblock Witt vectors. {I}.
\newblock In {\em Handbook of algebra. {V}ol. 6}, volume~6 of {\em Handb.
  Algebr.}, pages 319--472. Elsevier/North-Holland, Amsterdam, 2009.

\bibitem[Hes08]{hesselholt:witt-survey}
Lars Hesselholt.
\newblock Lecture notes on {W}itt vectors.
\newblock preprint at
  \url{http://web.math.ku.dk/~larsh/papers/s03/wittsurvey.pdf}, 2008.

\bibitem[Rav86]{ravenel:green}
Douglas~C. Ravenel.
\newblock {\em Complex cobordism and stable homotopy groups of spheres}, volume
  121 of {\em Pure and Applied Mathematics}.
\newblock Academic Press Inc., Orlando, FL, 1986.

\bibitem[Ser68]{serre:corps-locaux}
Jean-Pierre Serre.
\newblock {\em Corps locaux}.
\newblock Hermann, Paris, 1968.
\newblock Deuxi\`eme \'{e}dition, Publications de l'Universit\'{e} de Nancago,
  No. VIII.

\bibitem[Tak71]{takeuchi:free-hopf}
Mitsuhiro Takeuchi.
\newblock Free {H}opf algebras generated by coalgebras.
\newblock {\em J. Math. Soc. Japan}, 23:561--582, 1971.

\bibitem[Tak74]{takeuchi:tangent-coalgebras}
Mitsuhiro Takeuchi.
\newblock Tangent coalgebras and hyperalgebras. {I}.
\newblock {\em Japan. J. Math.}, 42:1--143, 1974.

\bibitem[Wit37]{witt:witt-vectors}
Ernst Witt.
\newblock Zyklische {K}\"{o}rper und {A}lgebren der {C}harakteristik {$p$} vom
  {G}rad {$p^n$}. {S}truktur diskret bewerteter perfekter {K}\"{o}rper mit
  vollkommenem {R}estklassenk\"{o}rper der {C}harakteristik {$p$}.
\newblock {\em J. Reine Angew. Math.}, 176:126--140, 1937.

\end{thebibliography}

\end{document}